\newtheorem{proposition}{Proposition}
\newtheorem{lemma}{Lemma}
\newtheorem{definition}{Definition}
\newtheorem{theorem}{Theorem}
\newtheorem{remark}{Remark}
\newtheorem{example}{Example}
\renewcommand{\L}{\mathcal L}
\newcommand{\M}{\mathcal M}
\newcommand{\R}{\mathbb R}
\newcommand{\N}{\mathbb N}
\renewcommand{\L}{\mathcal{L}}
\newcommand{\x}{\mathbf{x}}
\def\({\left(}
\def\){\right)}
\def\[{\begin{eqnarray}}
\def\]{\end{eqnarray}}
\numberwithin{equation}{section}
\begin{document}

\title{Similarity Between Two Dynamical Systems\footnotemark[1]}
\author{\  \  Xiaoying Wang \footnotemark[2] , Yong Li \footnotemark[2] \footnotemark[3] , Yuecai Han \footnotemark[2]}
\date{}
\maketitle
\renewcommand{\thefootnote}{\fnsymbol{footnote}}
\footnotetext[1]{\textbf{Funding:} The work of  Yong Li is partially supported by National Natural Science Foundation of China (No. 12071175, 11901056, 11571065),
Jilin Science and Technology Development Program (No. 20190201302JC, 20180101220JC).
The work of Yuecai Han is partially supported by National Natural Science Foundation of China (No. 11871244) and Jilin Science and Technology Development Program (No. 20190201302JC).}

\footnotetext[2]{School of Mathematics, Jilin University, Changchun 130012, P. R. China. \\ xiaoying21@mails.jlu.edu.cn, liyong@jlu.edu.cn, hanyc@jlu.edu.cn (Corresponding author).}
\footnotetext[3]{School of Mathematics and Statistics, Center for Mathematics and Interdisciplinary Sciences, Northeast Normal University, Changchun 130024, P. R. China.}

\renewcommand{\thefootnote}{\arabic{footnote}}

{\bf Abstract.}
The main focus of this paper is to explore how much similarity between two dynamical systems.
Analogous to the classical Hartman-Grobman theorem, the relationship between two systems can be linked by a homeomorphic map $K$, and the core is to study the minimizer $K^*$ to measure the degree of similarity.
We prove the sufficient conditions and necessary conditions (the maximum principle) for the existence of the minimizer $K^*$.
Further, we establish similarity theorem based on the Takens embedding theorem. As applications, Lorenz system, Chua's circuit system and Chen's system are simulated and tested. The results illustrate what is the similarity, which extends the conjugacy in dynamical systems. 

{\bf Keywords.} Similarity, conjugacy, maximum principle, Takens embedding theorem.
\allowdisplaybreaks
\section{Introduction}
\ \ \ \ Many processes or phenomena in nature and society are similar in some characteristics. How to find or extract them quantitatively is a challenging problem. For example, in dynamic systems, if two systems are conjugate, they will always be similar (qualitative behavior). In this regard, some important work has been done.
In addition to the classical papers of Hartman \cite{[H+60]} and Grobman \cite{[G+65]}, we can find the case of finite dimensional cases in the monographs of Hale \cite{[WL+12]}. So far, we have known some specific versions of Hartman-Grobman theorem, such as parabolic evolution equations (e.g. scalar reaction-diffusion equations \cite{[L+91]}, Cahn-Hilliard equation and similar phase-field equations \cite{[BL+94]}), the hyperbolic evolution equations (e.g. semilinear \cite{[HP+16]}, nonuniform \cite{[BV+06],[BV+09]}), control systems \cite{[BCP+07]}, and so on. For the smoothness of the conjugacy in the Hartman-Grobman theorem, see \cite{[ZLL+22]} and \cite{[ZLZ+17]}.

In this paper, we re-examine the dynamical systems from the perspective of similarity.
If we have given two systems described by differential equations respectively, what is their similarity? How can this similarity be determined by means of cost functional? We will touch these problems.

To be more precise, let $T>0$ be a fixed time duration and $D\subset\R^n$ be a bounded closed domain. For all $t\in[0, T]$, we consider the similarity between the following two dynamic systems, which are two ordinary differential equations (ODEs) that will run through the whole article:
\begin{equation}\label{2nonlinear}
	\left\{ \begin{aligned}
		\overset{.}x(t) &=f(t,x(t)),\\
		          x(0)&=x_0, \\ \end{aligned}  \right.~
	\left\{  \begin{aligned}
		\overset{.}y(t) &=g(t,y(t)),\\
	             y(0)&=y_0, \\  \end{aligned}  \right.
\end{equation}
where $f(t,x(t))$, $g(t,y(t)):[0,T]\times \mathbb{R}^n\rightarrow\mathbb{R}^n$ are functions; $x_0$, $y_0\in D$ are initial values, and $x(t),y(t)\in C^1([0,T]; \mathbb{R}^{n})$ are the strong solutions \footnote{We assume that the pair solution $(x,y)$ is guaranteed to exist and be unique for given mild conditions: $f(t,x(t))$, $g(t,y(t))$ are continuous with respect to $t$ and locally Lipschitz with respect to $x$ or $y$.} of (\ref{2nonlinear}). We denote by $x=x(t,x_0)$, $y=y(t,y_0)$ the solution of the first and second equation respectively.

As a special case of (\ref{2nonlinear}): A linear system
\begin{equation}\label{Hartman x}\left\{\begin{aligned}
\dot{x}(t)&=Ax(t),\\
x(0)&=x_0,
\end{aligned}\right.
\end{equation}
and a nonlinear system
\begin{equation}\label{Hartman y}\left\{\begin{aligned}
\dot{y}(t)&=Ay(t)+r(y(t)),\\
y(0)&=y_0,
\end{aligned}\right.
\end{equation}
where $A\in \R^{n\times n}$ is a constant matrix and $r(y)=o(|y|)\in C^1(\R^n)$, $|y(t)|=\underset{i=1}{\overset{n}{\sum}}|y_i(t)|$,
can be linked by the classical Hartman-Grobman theorem (linearisation theorem):
Nearby the hyperbolic \footnote{Here hyperbolic means that the matrix $A$ has no purely imaginary eigenvalues.} equilibrium
$x^*=\mathbf{0}$ \footnote{In fact, we can shift the system to the origin $O$ and consider the conjugate problem. More precisely, let $z(t,z_0)=y(t,y_0)-y_0$, then $\dot{z}(t)=\dot{y}(t)=g(t,y(t))=g(t,z(t)+y_0)\triangleq \tilde{g}(t,z(t))$ and $z(\mathbf{0})=z_0=\mathbf{0}$, where $\mathbf{0}$ is zero vector.}, the behaviour of the nonlinear system (\ref{Hartman y}) is completely similar to the linear system (\ref{Hartman x}) by means of a hemeomorphic coordinate transformation.

Inspired by the Hartman-Grobman theorem, we mainly study the existence and uniqueness of the homeomorphism (bijective and bicontinuous) $K$ for general nonlinear systems (\ref{2nonlinear}).
For the sufficient conditions of existence, we discuss the compact (Theorem \ref{Polynomial space's minimizer} and \ref{exists in GL(n)}) and noncompact (Theorem \ref{Approximate minimum}) spaces composed of homeomorphic maps. For the necessary conditions of existence, we prove the maximum principle of similarity between the two systems (Theorem \ref{max1}).

If there exists a homeomorphic map $K$ that maps $x$ to $y$ in (\ref{2nonlinear}),
then studying the system $y$ is equivalent to studying the system $x$ and the form of $K$.
However, even in the finite-dimensional case, 
the homeomorphism $K$ which linearises the nonlinear problem in line  with
\begin{equation}\label{Kx=x}
K(e^{At}x_0)=y(t,K(x_0)),t\in[0,T],x_0\in D,
\end{equation}
is in general not unique, let alone write its form explicitly.

In order to explicitly represent $K$, we have done some work. For the sufficient conditions of the existence, if we make $K$ explicitly change with $t$, we can explicitly write the form of $K$, i.e. the $K(t)$ in Theorem \ref{exists in GL(n)}. In addition, according to the controllability theory of systems, when the time terminal $t_1$ is fixed, we hope to obtain the terminal $y(t_1)$ through $x$. We can explicitly write the form of the homeomorphism $K$ of (\ref{Hartman y}) and (\ref{Hartman x}), i.e. the $K(t_1)$ in Theorem \ref{terminal K}. At this time, $K$ is independent of time $t$.

Then we propose the concept of the similarity degree of any two systems by referring to the conjugacy theory of dynamical systems, and establish the general principles of the similarity by introducing suitable transformation. 

Our theoretical and numerical results show that the similarity between two differential dynamical systems and the minimizer $K^*$ of cost functional are intimately related.
It means that we can connect two seemingly unrelated systems through some similarity transformation, make prediction based on the known data, and realize the integration and unity of the systems.

The remainder of the paper is organized as follows: In Section \ref{DL}, we put forward some concepts (conjugacy, similarity, etc.), in which the minimizer $K^*$ is the key to the similarity between two systems. 
For the existence of the $K^*$ minimizer, the sufficient condition is proved theoretically in Section \ref{Sufficient}, while the necessary condition (the maximum principle) is proved theoretically in Section \ref{MAX}.
Section \ref{Uniqueness Similarity} shows the uniqueness of the minimizer and the similarity between different systems in both continuous form and discrete form. In Section \ref{embed}, we propose the similarity theory based on Takens embedding theorem. Finally, we provide some numerical simulation between two nonlinear systems (Lorenz system, Chua's circuit system and Chen's system) in Section \ref{test}.

\section{Preliminaries}\label{DL}
\ \ \ \ The homeomorphism $K$ in (\ref{Kx=x}) characterizes topological conjugation. Based on this, we give the following definitions characterizing the relationship between two systems $x$ and $y$.

Let $\Gamma_{x_0}:=\{\rho\in\R^n|\rho=x(t,x_0),~t\in[0,T]\}$ represent the trajectory of $x$, and $\Gamma_{y_0}:=\{\tilde{\rho}\in\R^n|\tilde{\rho}=y(t,y_0),~t\in[0,T]\}$ represent the trajectory of $y$. Let $\Omega:=\{K:\Gamma_{x_0}\rightarrow\Gamma_{y_0}|K\text{ is homeomorphic}\}$
denote the admissible set. Let $N(x_0,\delta)=\{x_0^+\in D\mid|x_0^+-x_0|<\delta\}$.

\begin{definition}\label{conjugation}
\textbf{(System Conjugacy)}
Suppose that $x(t)$, $y(t)$ satisfy (\ref{2nonlinear}).
If there are a homeomorphic map $K:\Gamma_{x_0}\rightarrow\Gamma_{y_0}$
and a constant $\delta\geq0$, 
for all $x_0^+\in N(x_0,\delta)$ such that
\begin{equation*}
    y_0^+=K(x_0^+),~K(x(t,x_0^+))=y(t,y_0^+),~t\in[0,T],
\end{equation*}
then we call systems $x$ and $y$ locally conjugate in $N(x_0,\delta)$.
Further, if there admits a homeomorphic map $K:\Gamma_{x_0}\rightarrow\Gamma_{y_0}$, for all $x_0^+\in D$ such that
\begin{equation*}
    y_0^+=K(x_0^+),~K(x(t,x_0^+))=y(t,y_0^+),~t\in[0,T],
\end{equation*}
then we call systems $x$ and $y$ conjugate.

\end{definition}

\begin{definition}\label{semi-conjugation}
\textbf{(System Semi-conjugacy)}
Suppose that $x(t)$, $y(t)$ satisfy (\ref{2nonlinear}).
If there are two homeomorphic maps $K, R:\Gamma_{x_0}\rightarrow\Gamma_{y_0}$
and a constant $\delta\geq0$, 
for all $x_0^+\in N(x_0,\delta)$ such that
\begin{equation*}
    y_0^+=K(x_0^+),~R(x(t,x_0^+))=y(t,y_0^+),~t\in[0,T],
\end{equation*}
then we call systems $x$ and $y$ locally semi-conjugate in $N(x_0,\delta)$.
Further, if there admit two homeomorphic maps
$K, R:\Gamma_{x_0}\rightarrow\Gamma_{y_0}$, for all $x_0^+\in D$ such that
\begin{equation*}
    y_0^+=K(x_0^+),~R(x(t,x_0^+))=y(t,y_0^+),~t\in[0,T],
\end{equation*}
then we call systems $x$ and $y$ semi-conjugate.

\end{definition}
There are some elementary examples in Section \ref{egs}, which help us have a specific understanding of the homeomorphic map $K$ in conjugate case.\\

Sometimes, there is no such conjugacy or semi-conjugacy between two systems. 
So we go back to the second place and find a certain degree of conjugacy, and this is why we propose the definition of cost functional (Definition \ref{Cost functional}).
We define the following functionals, which are the extensions of Definition \ref{conjugation} and Definition \ref{semi-conjugation}:
\begin{definition}\label{Cost functional}
\textbf{(Cost functional)} Suppose that $x(t)$, $y(t)$ satisfy (\ref{2nonlinear}). Follow the definitions and notations in Definition \ref{conjugation} and Definition \ref{semi-conjugation} \footnote{$||\cdots||$ represents the norm of vector, such as the $L^1$ norm $||x||=|x_1|+|x_2|+\cdots+|x_n|$ and the $L^2$ norm (Euclidean norm) $||x||=\sqrt{x_1^2+x_2^2+\cdots+x_n^2}$, etc.. They are all equivalent. }
\begin{equation*}
	\tilde{J}[K]\triangleq\underset{x_0^+\in N(x_0,\delta)}{\sup}\dfrac{1}{T} \int_{0}^{T}||K(x(t,x_0^+))-y(t,K(x_0^+))||^2 dt ,
\end{equation*}
\begin{equation*}
	\tilde{J}[K,R]\triangleq\underset{x_0^+\in N(x_0,\delta)}{\sup}\dfrac{1}{T} \int_{0}^{T}||R(x(t,x_0^+))-y(t,K(x_0^+))||^2 dt .
\end{equation*}
\end{definition}

For each fixed initial value $x_0$, the trajectory $\Gamma_{x_0}$ is determined. Without losing generality, we assume that the functional $\tilde{J}[\cdot]$ take the maximum value at $x(t,x_0)$, that is:
\begin{equation}\label{conjugation functional}
	\tilde{J}[K]=\dfrac{1}{T} \int_{0}^{T}||K(x(t,x_0))-y(t,K(x_0))||^2 dt \triangleq J[K],
\end{equation}
\begin{equation*}\label{semi-conjugation functional}
	\tilde{J}[K,R]=\dfrac{1}{T} \int_{0}^{T}||R(x(t,x_0))-y(t,K(x_0))||^2 dt \triangleq J[K,R],
\end{equation*}
where $K,R \in \Omega$. 
$J[K],J[K,R]$ are the lower semi-continuous functions, and obviously, $J[K]\geq0,~J[K,R]\geq0$.

If the functional $J[K]$ exists a minimum and $J[K^*]$ is the minimum,
$$J[K^*]=\underset{K\in\Omega}{\inf}~\underset{x_0^+\in N(x_0,\delta)}{\sup}\dfrac{1}{T} \int_{0}^{T}||K(x(t,x_0^+))-y(t,y_0^+)||^2 dt,$$
then $x$ and
$y$ can satisfy the conjugacy to a certain extent. Particularly, if $J[K^*]=0$, then $||K^*(x(t,x_0 ))- y(t,K^*(x_0))||^2=0$, $K^*(x(t,x_0 ))=y(t,K^*(x_0 ))$, thus $x$ and $y$ conjugate.



Analogously, if the functional $J[K,R]$ exists a minimum and $J[K^*,R^*]$ is the minimum, then $x$
and $y$ can satisfy the semi-conjugacy to a certain extent. Particularly, if
$K=R$, then J$[K,R]$ is reduced to $J[K]$.\\

Clearly, the larger the cost functional (Definition \ref{Cost functional}), the smaller the similarity between the two dynamics,
and the range of $J[K],J[K,R]$ is $[0,+\infty]$. 

When $J[K]=0~(J[K,R]=0)$, the two systems conjugate (semi-conjugate). In other words, they are completely similar (semi-similar), the corresponding similarity degree (semi-similarity degree) should be 1. When $J[K]=+\infty~(J[K,R]=+\infty$), the two systems are completely dissimilar, the corresponding similarity degree (semi-similarity degree) should be 0.

It is worth mentioning that these are independent of the selection of the similarity degree function. Based on this, we give the concept of the similarity degree function to quantitatively describe the similarity between two systems.


\begin{definition}\label{similarity}
\textbf{(Similarity degree)} \label{similarity0}
Let the function $\rho(x)$ be continuous and monotonically decreasing in $[0,+\infty]$ such that 
$\rho(0)=1$ and $\rho(+\infty)=0$, then we call $\rho(x)$ a similarity degree function.

We call $\rho(J[K])$ the similarity degree of the systems (\ref{2nonlinear}) with respect to $\rho$ and $\rho(J[K,R])$ the semi-similarity degree of the systems (\ref{2nonlinear}) with respect to $\rho$, respectively.
In particular, when conjugating or semi-conjugating, the corresponding similarity and semi-similarity hold with $\rho(J[K^*])=1,~\rho(J[K^*,R^*])=1$.
\end{definition}

For example, $\rho(x)=\frac{\log(1+x)}{x}$ \footnote{Notice that $\underset{x\rightarrow0^+}{\lim}\frac{\log(1+x)}{x}=1$ and $\underset{x\rightarrow +\infty}{\lim}\frac{\log(1+x)}{x}=0$.}, then
\begin{equation}
	\rho(J[K])=\left\{ \begin{aligned}	\frac{\log\Big(1+\underset{K\in\{\Omega|K(x_0)=y_0\}}{\inf}~J[K]\Big)}{\underset{K\in\{\Omega|K(x_0)=y_0\}}{\inf}~J[K]}, &\qquad \underset{K\in\{\Omega|K(x_0)=y_0\}}{\inf}~J[K]\neq0,\\
		          1,\qquad\qquad &\qquad \underset{K\in\{\Omega|K(x_0)=y_0\}}{\inf}~J[K]=0, \\ \end{aligned}  \right.
\end{equation}
\begin{equation*}
\rho(J[K,R])=\left\{ \begin{aligned}
		\frac{\log\Big(1+\underset{(K,R)\in\{\Omega\times\Omega|K(x_0)=y_0\}}{\inf}~J[K,R]\Big)}{\underset{(K,R)\in\{\Omega\times\Omega|K(x_0)=y_0\}}{\inf}~J[K,R]}, &\qquad \underset{(K,R)\in\{\Omega\times\Omega|K(x_0)=y_0\}}{\inf}~J[K,R]\neq0,\\
		          1,\qquad\qquad &\qquad \underset{(K,R)\in\{\Omega\times\Omega|K(x_0)=y_0\}}{\inf}~J[K,R]=0. \\ \end{aligned}  \right.
\end{equation*}


The following discussion is mainly for $J[K]$ in the case of a homeomorphism map $K$, and the proof of $J[K,R]$ is completely analogous.

The core problem is to find \textbf{the minimizer $K^*$}, 
\textbf{which decides the similarity between two systems.} If $K^*(\cdot)$ is a constant matrix, then the similarity is the linear similarity. If $K^*(\cdot)$ is orthogonal (metric preserving) or symplectic (differential structure preserving), then the similarity is called the rigid similarity.

\subsection{Analogous to Hartman-Grobman theorem}
\ \ \ \ Inspired by the classical Hartman-Grobman theorem, we consider the conjugate functional problem (\ref{conjugation functional}) of equation (\ref{Hartman y}) and equation (\ref{Hartman x}).
In this case, the form of the minimizer $K^*$ can be found.

We recall the definition of the dichotomic projection and the associated Green kernel of the generator $A$ of the $C_0$-semigroup $e^{At}$ in a Banach space $X$.
\begin{definition}
A projection $P_+\in\mathcal{B}(X)$ is called a dichotomic projection or (exponential) dichotomy for the $C_0$-semigroup $e^{At}$ or of its generator $A$ in $X$, if there are constants $M\geq1,\eta>0$
such that with $P_-=I_X-P_+$ the following conditions are satisfied:
$$
\begin{aligned}
&(i)~P_+e^{At}=e^{At}P_+,\forall~t\geq0,\\
&(ii)~|e^{At}P_+x|_X\leq Me^{-\eta t}|P_+x|_X,\forall~t\geq0,\forall~x\in X,\\
&(iii)~e^{At}P_-\text{ extends to  a } C_0-\text{group on }R(P_-),\\
&(iv)~|e^{At}P_-x|_X\leq Me^{\eta t}|P_-x|_X,\forall~t\leq0,\forall~x\in X.
\end{aligned}
$$
The Green kernel corresponding to the dichotomy is defined as
$$
G_A(t)=\left\{\begin{aligned}
e^{At}P_+,t\geq0,\\
-e^{At}P_-,t<0.
\end{aligned}\right.
$$
\end{definition}

As the results in \cite{[HP+16]}, the following proposition is obtained.
\begin{proposition}\label{Hartman}
Suppose that $x(t)$, $y(t)$ satisfy equation (\ref{Hartman x}) and equation (\ref{Hartman y}). Let $A$ admit a dichotomic projection and $e^{At}$ represent the fundamental solution matrix of equation (\ref{Hartman x}) with Green kernel $|G_A(t)|\leq Me^{-\eta|t|},t\in\R$, with some constants $M\geq1$ and $\eta>0$. Let $r(\cdot)$ be bounded and Lipschitz continuous ,with the Lipschitz constant $|r|_{Lip}$ of the map $r(\cdot)$ satisfies \begin{equation}\label{r Lipschitz}
(\frac{2M}{\eta})|r|_{Lip}<1.
\end{equation}
Then there is a homeomorphism $K^*=I_X+g$, such that the conjugate functional (\ref{conjugation functional}) is equal to 0 (i.e. $J[K^*]=0$), where $I_X$ is the identity map and $g(x)=\int_\R G_A(s)r(e^{-As}x+g(e^{-As}x))ds$.
\end{proposition}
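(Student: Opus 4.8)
The plan is to realise $g$ as the unique fixed point of a contraction, verify the conjugacy identity $K^*(e^{At}x_0)=y(t,K^*(x_0))$ by differentiating an integral representation, and then promote $K^*=I_X+g$ to a homeomorphism by a mirror fixed-point construction of its inverse. First I would work on the Banach space $\mathcal{C}_b$ of bounded continuous maps $X\to X$ with the supremum norm $\|h\|:=\sup_{x\in X}|h(x)|_X$, and introduce
\begin{equation*}
(\mathcal{T}h)(x):=\int_{\R} G_A(s)\,r\big(e^{-As}x+h(e^{-As}x)\big)\,ds .
\end{equation*}
Since $r$ is bounded and $|G_A(s)|\le Me^{-\eta|s|}$, this integral converges absolutely with $\|\mathcal{T}h\|\le(2M/\eta)\|r\|_\infty$, and $r\in C^1$ makes $\mathcal{T}h$ continuous, so $\mathcal{T}:\mathcal{C}_b\to\mathcal{C}_b$. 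The Lipschitz bound on $r$ gives $\|\mathcal{T}h_1-\mathcal{T}h_2\|\le(2M/\eta)|r|_{Lip}\,\|h_1-h_2\|$, so by (\ref{r Lipschitz}) $\mathcal{T}$ is a contraction; its unique fixed point is precisely the $g$ in the statement.

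Next I would establish the conjugacy. Substituting $x=e^{At}x_0$ in the fixed-point equation and changing variables $\tau=t-s$ yields $g(e^{At}x_0)=\int_{\R}G_A(t-\tau)\,\phi(\tau)\,d\tau$ with $\phi(\tau):=r\big(e^{A\tau}x_0+g(e^{A\tau}x_0)\big)$, a bounded continuous function of $\tau$. Splitting this integral at $\tau=t$ and differentiating in $t$ — using $\tfrac{d}{dt}G_A(t)=AG_A(t)$ for $t\ne0$ and $G_A(0^+)-G_A(0^-)=P_++P_-=I_X$ for the resulting boundary terms — one obtains $\tfrac{d}{dt}g(e^{At}x_0)=A\,g(e^{At}x_0)+r\big(e^{At}x_0+g(e^{At}x_0)\big)$. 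Hence $u(t):=K^*(e^{At}x_0)=e^{At}x_0+g(e^{At}x_0)$ solves $\dot u=Au+r(u)$ with $u(0)=K^*(x_0)$, and uniqueness of solutions of (\ref{Hartman y}) forces $K^*(e^{At}x_0)=y(t,K^*(x_0))$ for every $t\in[0,T]$ and every $x_0$. Consequently the integrand in (\ref{conjugation functional}) vanishes identically and $J[K^*]=0$.

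It remains to show that $K^*$ is a homeomorphism. It is continuous because $g$ is. I would construct a candidate inverse $L=I_X+h$ by a fixed-point argument of exactly the same form, but driven by the flow $y(t,\cdot)$ of (\ref{Hartman y}) in place of $e^{At}$, so that $L$ intertwines $y(t,\cdot)$ with $e^{At}$: $L(y(t,x))=e^{At}L(x)$. Then $\Phi:=L\circ K^*$ satisfies $\Phi(e^{At}x)=e^{At}\Phi(x)$, so $w:=\Phi-I_X\in\mathcal{C}_b$ obeys $w(e^{At}x)=e^{At}w(x)$; decomposing $w=P_+w+P_-w$ and using the dichotomy estimates, for $t\ge0$ one has $|P_-w(x)|=|e^{-At}P_-w(e^{At}x)|\le Me^{-\eta t}\|w\|\to0$ and $|P_+w(x)|=|e^{At}P_+w(e^{-At}x)|\le Me^{-\eta t}\|w\|\to0$, whence $w\equiv0$ and $L\circ K^*=\mathrm{id}$; the symmetric reasoning for the self-conjugacy of the $y$-flow gives $K^*\circ L=\mathrm{id}$, so $K^*$ is a homeomorphism with inverse $L$.

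The hard part will be this last step — the construction of the inverse and the verification of bijectivity and bicontinuity — because the mirror fixed-point must control the nonlinear flow $y(t,\cdot)$ uniformly on the relevant set, and closing $K^*\circ L=\mathrm{id}$ relies on the rigidity that a bounded-perturbation-of-identity self-conjugacy of a flow with an exponential dichotomy must be the identity. A smaller but genuine technical point is the differentiation in the conjugacy step at $t=0$: since $G_A$ jumps there one cannot differentiate naively under the integral sign, and it is exactly the jump relation $G_A(0^+)-G_A(0^-)=I_X$ that supplies the term $r(\,\cdot\,)$ on the right-hand side.
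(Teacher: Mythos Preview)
Your approach is correct and follows the same core strategy as the paper: construct $g$ as the unique fixed point of the contraction $\mathcal{T}$ on bounded continuous maps, with the same estimates $\|\mathcal{T}h\|\le(2M/\eta)\|r\|_\infty$ and Lipschitz constant $(2M/\eta)|r|_{Lip}<1$. The paper's proof is considerably terser --- it simply asserts the conjugacy identity ``using the properties of the Green kernel $G_A$'' and declares $K^*$ a homeomorphism without further argument (the result is essentially imported from \cite{[HP+16]}) --- whereas you actually carry out the differentiation of the convolution representation to verify $K^*(e^{At}x_0)=y(t,K^*(x_0))$ and supply the mirror fixed-point construction of the inverse together with the rigidity argument ($w$ bounded, $w(e^{At}x)=e^{At}w(x)$ forces $w\equiv0$) to establish bijectivity; these details are exactly what the paper omits, so your write-up is more complete than the original on both points.
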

\begin{proof}
The $y(t)=e^{At}K(x_0)+\int_0^te^{A(t-s)}r(K(e^{As}x_0))$ is the solution of (\ref{Hartman y}) with initial value $K(x_0)$, then the functional (\ref{conjugation functional}) is
$$
J[K]=\frac{1}{T}\int_0^T||K(e^{At}x_0)-[e^{At}K(x_0)+\int_0^te^{A(t-s)}r(K(e^{As}x_0))]||^2ds.
$$

Actually, the homeomorphism map is $K^*(x)=x+g(x)$ (i.e. $K^*=I_X+g$) such that (\ref{Hartman y}) and (\ref{Hartman x}) conjugate, where $I_X$ is the identity map and $g(x)=\int_\R G_A(s)r(e^{-As}x+g(e^{-As}x))ds\triangleq (Tg)(x)$.
Therefore, we only need to prove the fixed point problem of the map $T$ via the Banach contraction mapping principle. 

We calculate
$$|Tg|_\infty\leq\int_\R |G_A(s)|~|r|_\infty ds\leq2M|r|_\infty\int_0^{+\infty}e^{-\eta s}ds=\frac{2M}{\eta}|r|_\infty<\infty,$$
and
\begin{equation}\begin{aligned}
|Tg(x)-T\bar{g}(x)|_\infty&\leq\int_\R |G_A(s)|~|r|_{Lip}~|g(e^{-As}x)-\bar{g}(e^{-As}x)| ds\\
&\leq2M|r|_{Lip}~|g-\bar{g}|_\infty\int_0^{+\infty}e^{-\eta s}ds\\
&=\frac{2M}{\eta}|r|_{Lip}~|g-\bar{g}|_\infty.
\end{aligned}\end{equation}
The condition (\ref{r Lipschitz}) shows that $T$ is a contractive map, so there is a unique fixed point. This implies the homeomorphism $K^*$ exists with the form $K^*=I_X+g$, and is unique.
Using the properties of the Green kernel $G_A$, $K^*$ is given by
\begin{equation}
K^*(e^{At}x_0)=e^{At}K^*(x_0)+\int_0^te^{A(t-s)}r(K^*(e^{As}x_0))ds=y(t,K^*(x_0)),\forall~t\in[0,T].
\end{equation}
Since (\ref{Hartman y}) and (\ref{Hartman x}) conjugate, $J [K^*]=0$.
\end{proof}

In general, two nonlinear systems (\ref{2nonlinear}) are not necessarily conjugate. In order to weaken the conjugacy between $x$ and $y$, we do not require the conjugate equation to be strictly true. Instead, we look for the conditions for the existence of functional minimum in (\ref{conjugation functional}). We will prove the sufficient conditions for the existence of the minimizer $K^*$ in Section \ref{Sufficient}, and the necessary conditions for the existence of the minimizer $K^*$ in Section \ref{MAX}. The main ideas are as follows.
\subsection{The main idea}\label{The main idea}
\ \ \ \ Based on the conditions we mentioned above, 
norm and distance in $\Omega$ are defined respectively as follows: $||K||_\Omega\triangleq\underset{x\in\Gamma_{x_0}}{\sup}~|K(x)|,~d(K_1,K_2)\triangleq||K_1-K_2||_\Omega$. Then $\Omega,~\Omega\times\Omega$ are normed spaces. To explain our approach, we begin with some motivations by recalling some useful facts to be applied below. \\

We first consider the (sufficient) existence of the minimizer $K^*$ of functional $J[K]$, where $K\in\Omega\subset C[\Gamma_{x_0};\Gamma_{y_0}]$.

\textbf{(i)} From the previous definition, $\Gamma_{x_0},\Gamma_{y_0}\subset[0,T]\times\R^n$ are two bounded closed domains. According to the topological method (Poincar\'{e} Theorem \cite{[Poincare]}), any two curve segments are topologically homeomorphic, which means  they are contained in homeomorphic two closed spheres. Then, the homeomorphism $K$ is bounded. Therefore, the minimization sequence is bounded, and in the sense of $\mathcal{L}^2$, the weak convergence subsequence can be taken.

\textbf{(ii)} Take the weakly convergent subsequence (in the sense of functional), and it can be regarded as the approximate minimum of functional. The approximate minimum of functional can be obtained by Ekeland variational principle (Lemma \ref{Ekeland variational principle}). Therefore, the approximate minimization sequence is bounded. Here, approximation means that the obtained sequence is related to $\epsilon$ (see Section \ref{K in Omega} Theorem \ref{Approximate minimum} for details).
\begin{lemma}\label{Ekeland variational principle}
\textbf{(Ekeland variational principle)}
Let $(V,d)$ be a complete metric space, $F:V\rightarrow\R\cup\{+\infty\}$ be a lower-semi-continuous function on $V$, and $\underset{v\in V}{\inf}~F(v)<+\infty$. Fix $\epsilon>0$, if there exists $u\in V$ s.t. $F(u)\leq\underset{v\in V}{\inf}~F(v)+\epsilon$, then for every $\lambda>0$, there exists a point $u^\epsilon\in V$ such that
$$\begin{cases}
  i)~F(u^\epsilon)\leq F(u),\\
  ii)~d(u^\epsilon,u)\leq\lambda,\\
  iii)~F(u^\epsilon)\leq F(w)+\frac{\epsilon}{\lambda}d(w,u^\epsilon),~\forall ~w\in V.
\end{cases}
$$
\end{lemma}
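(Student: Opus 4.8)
I would prove the Ekeland variational principle by the standard route of encoding conclusion (iii) as a minimality condition for a partial order and then producing a minimal element through a contractive iteration. Set $\alpha := \epsilon/\lambda > 0$ and define a relation on $V$ by declaring $w \preceq v$ whenever $F(w) + \alpha\, d(w,v) \le F(v)$. The first step is to check that $\preceq$ is a genuine partial order: reflexivity is immediate, antisymmetry follows by adding the two defining inequalities (which forces $d(w,v)=0$), and transitivity is exactly where the triangle inequality for $d$ enters. With this notation, assertion (iii) says precisely that $u^\epsilon$ is a $\preceq$-minimal element of $V$, so the whole problem reduces to finding a minimal element lying below the given point $u$.

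Next I would build a $\preceq$-decreasing chain. Put $u_0 := u$ and, given $u_n$, set $S_n := \{w \in V : w \preceq u_n\}$, which is nonempty (it contains $u_n$) and on which $F$ is bounded below by $\inf_V F$. The key quantitative step is to choose $u_{n+1} \in S_n$ with $F(u_{n+1}) \le \tfrac12\big(F(u_n) + \inf_{S_n} F\big)$. Then $u_{n+1} \preceq u_n$, the numbers $F(u_n)$ form a nonincreasing sequence bounded below, and from $\alpha\, d(u_n,u_{n+1}) \le F(u_n) - F(u_{n+1})$ together with transitivity of the chain one gets $\alpha\, d(u_n,u_m) \le F(u_n) - F(u_m)$ for all $m > n$. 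Since $F(u_n)$ converges, $(u_n)$ is Cauchy, hence convergent to some $u^\epsilon \in V$ by completeness. Passing to the limit $m \to \infty$ in $F(u_m) + \alpha\, d(u_m,u_n) \le F(u_n)$, using lower semicontinuity of $F$ and continuity of $d$, I obtain $u^\epsilon \preceq u_n$ for every $n$; in particular $u^\epsilon \preceq u_0 = u$, which yields (i) $F(u^\epsilon) \le F(u)$, and then $\alpha\, d(u^\epsilon,u) \le F(u) - F(u^\epsilon) \le F(u) - \inf_V F \le \epsilon$ gives (ii) $d(u^\epsilon,u) \le \lambda$.

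Finally I would verify minimality of $u^\epsilon$, i.e. (iii). Suppose $w \preceq u^\epsilon$ with $w \ne u^\epsilon$; then $w \preceq u_n$ for all $n$, so $w \in S_n$ and hence $F(w) \ge \inf_{S_n} F$. The choice of $u_{n+1}$ gives the squeeze $2F(u_{n+1}) - F(u_n) \le \inf_{S_n} F \le F(u_{n+1})$, and since $F(u_n) \downarrow L := \lim_n F(u_n)$ both bounds tend to $L$, so $\inf_{S_n} F \to L$ and therefore $F(w) \ge L$. On the other hand $u^\epsilon \preceq u_n$ forces $F(u^\epsilon) \le F(u_n) \to L$, so $F(u^\epsilon) \le L \le F(w)$; combined with $F(w) + \alpha\, d(w,u^\epsilon) \le F(u^\epsilon)$ this gives $d(w,u^\epsilon) = 0$, contradicting $w \ne u^\epsilon$. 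Hence no strict predecessor of $u^\epsilon$ exists, which is exactly statement (iii).

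I expect the main obstacle to be the choice of the iteration in the second step: one must pick $u_{n+1}$ quantitatively close to $\inf_{S_n} F$ so that the auxiliary infima $\inf_{S_n} F$ and the values $F(u_n)$ end up sharing the same limit $L$ — this is what makes the final minimality argument close. A secondary delicate point is the limit passage defining $u^\epsilon \preceq u_n$, which genuinely uses lower semicontinuity of $F$ (and would fail for a merely pointwise limit); everything else — the order axioms, the Cauchy estimate, and the bounds (i)–(ii) — is routine once the order $\preceq$ and the iteration are set up.
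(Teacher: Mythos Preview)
Your proof is correct and follows the standard route to Ekeland's variational principle: encode condition (iii) as $\preceq$-minimality for the order $w \preceq v \Leftrightarrow F(w)+\alpha\,d(w,v)\le F(v)$, build a decreasing chain by nearly minimising $F$ over the down-sets $S_n$, use completeness and lower semicontinuity to produce a limit $u^\epsilon$, and then squeeze to rule out strict predecessors. The quantitative choice $F(u_{n+1})\le \tfrac12(F(u_n)+\inf_{S_n}F)$ is exactly the right device to force $\inf_{S_n}F\to L=\lim F(u_n)$, and your derivations of (i) and (ii) from $u^\epsilon\preceq u$ together with $F(u)\le \inf_V F+\epsilon$ are clean.

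There is, however, nothing to compare against: the paper \emph{states} this lemma as a classical tool (it is Lemma~\ref{Ekeland variational principle}, invoked in the proof of Theorem~\ref{Approximate minimum}) and gives no proof of it. So your write-up is not an alternative to the paper's argument but rather a supplement the paper omits. If anything, your version is slightly stronger than what the paper needs, since you actually establish strict minimality (any $w\preceq u^\epsilon$ forces $w=u^\epsilon$), whereas the application only uses the inequality form (iii).
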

\textbf{(iii)} The minimization sequence generally cannot converge strongly. Therefore, using the vector field of gradient flow in Hilbert space, the strong limit cannot be taken. And the limit point of weak convergence may not be unique. Hence, it cannot be concluded that the field at the limit point is non-zero.

However, if the space is compact, the strongly convergent subsequence can be obtained according to Arzel\`{a}-Ascoli lemma. We will discuss the minimizer $K^*$ in the Polynomial space (Section \ref{K in Omega} Theorem \ref{Polynomial space's minimizer}).

Note that for (\ref{Hartman y}) and (\ref{Hartman x}), the homeomorphism $K^*=I_X+g\in\Omega$ in Proposition \ref{Hartman} is not explicitly represented by $t$, but implicitly represented by $x (t)$, where $g(x)=\int_\R G_A(s)r(e^{-As}x+g(e^{-As}x))ds$.
For general nonlinear systems (\ref{2nonlinear}), we cannot obtain such conclusion directly.
Instead, we consider explicitly expressing $K^*$ as $K^*(t)$ and discuss the form when $K^*(t)\in GL(n)\cap\Omega$ \footnote{$GL (n)$ represents a general linear matrix group, and the matrices in this group are of order $n$.} (Section \ref{K(t) in GL(n)} Theorem \ref{exists in GL(n)}).

Further, we consider $x(t_1)=x_1,y(t_1)=y_1$ on the trajectories $x(t),y(t)$ when the terminal $t=t_1\in[0,T]$ is given. We will discuss the form of $K^*$ when $K^*x_1=y_1$ in Section \ref{When K in H}.\\

Then we consider how to determine the minimizer $K^*(t)$, which is the necessary condition for existence, like Pontryagin maximum principle.

\textbf{(iv)} If the strong limit exists, the variation at the extreme point must be 0. The following lemma converts the variation of a functional into the derivation of a function.
\begin{lemma}
If functional $J[K(\cdot)]$ has variation $\delta J[\delta K(\cdot)]$
\footnote{The variation $\delta J[\delta K(\cdot)]$ means that the functional $J$ changes $\delta K(\cdot)$ at $K_0$, and the corresponding dependent variable $\Delta J\triangleq J[K_0(\cdot)+\delta K(\cdot)]-J[K_0(\cdot)]$ satisfies $\Delta J=\delta J[\delta K(\cdot)]+o(\parallel \delta K(\cdot)\parallel)$.} at $K_0$, $\forall ~\epsilon \in \mathbb{R}$, 
let
\begin{equation*}
	J(\epsilon)=J[K_0 (\cdot)+\epsilon \delta K(\cdot)],
\end{equation*}
then
$$
\begin{cases}
i)~J(\epsilon)\text{ is defined near } \epsilon=0 \text{ and differentiable at }\epsilon=0,\\
ii)~\delta J[\delta K(\cdot)]=\dfrac{d}{d\epsilon} J(\epsilon)|_{\epsilon =0}.
\end{cases}
$$
\end{lemma}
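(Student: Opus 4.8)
The statement to prove is the elementary lemma converting the first variation of the functional $J$ into an ordinary derivative: given that $J[K(\cdot)]$ has a variation $\delta J[\delta K(\cdot)]$ at $K_0$ in the sense of the footnote (i.e. $\Delta J = \delta J[\delta K(\cdot)] + o(\|\delta K(\cdot)\|)$), and setting $J(\epsilon) = J[K_0(\cdot)+\epsilon\,\delta K(\cdot)]$, we must show that $J(\epsilon)$ is defined for $\epsilon$ near $0$, is differentiable at $\epsilon = 0$, and that $\delta J[\delta K(\cdot)] = \frac{d}{d\epsilon}J(\epsilon)\big|_{\epsilon=0}$. The whole point is that this is a purely formal consequence of the definition of the variation, so the plan is to unwind the definitions carefully rather than to invoke any deep machinery.

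First I would address well-definedness: since $K_0 \in \Omega$ and $\delta K$ is an admissible perturbation (lying in the linear span over which variations are taken, so that $K_0 + \epsilon\,\delta K$ stays in the normed space on which $J$ acts for $|\epsilon|$ small), the map $\epsilon \mapsto J[K_0(\cdot)+\epsilon\,\delta K(\cdot)]$ is defined on a neighbourhood of $\epsilon = 0$; here one uses that $\Gamma_{x_0},\Gamma_{y_0}$ are bounded closed domains so the relevant norms are finite, and that $J$ takes values in $[0,+\infty]$ with $J[K_0]<\infty$ guaranteeing finiteness nearby. Then I would substitute the increment $\delta K \rightsquigarrow \epsilon\,\delta K$ into the defining asymptotic relation for the variation. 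By hypothesis, with increment $h(\cdot) = \epsilon\,\delta K(\cdot)$ we get
\[
J(\epsilon) - J(0) = J[K_0(\cdot) + \epsilon\,\delta K(\cdot)] - J[K_0(\cdot)] = \delta J[\epsilon\,\delta K(\cdot)] + o(\|\epsilon\,\delta K(\cdot)\|).
\]
Using linearity of the variation in its argument (a standing property of the Gâteaux-type variation being used, which I would state explicitly), $\delta J[\epsilon\,\delta K(\cdot)] = \epsilon\,\delta J[\delta K(\cdot)]$, and $\|\epsilon\,\delta K(\cdot)\| = |\epsilon|\,\|\delta K(\cdot)\|$, so the remainder is $o(|\epsilon|)$ as $\epsilon \to 0$. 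Hence
\[
\frac{J(\epsilon) - J(0)}{\epsilon} = \delta J[\delta K(\cdot)] + \frac{o(|\epsilon|)}{\epsilon} \xrightarrow[\epsilon \to 0]{} \delta J[\delta K(\cdot)],
\]
which simultaneously establishes differentiability at $\epsilon = 0$ and the identification of the derivative with $\delta J[\delta K(\cdot)]$, proving both (i) and (ii).

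The only genuine subtlety — and the step I would flag as the place where care is needed rather than a true obstacle — is the passage $\delta J[\epsilon\,\delta K(\cdot)] = \epsilon\,\delta J[\delta K(\cdot)]$ together with $o(\|\epsilon\,\delta K(\cdot)\|) = o(|\epsilon|)$: strictly, the footnote's definition of the variation posits the expansion $\Delta J = \delta J[\delta K(\cdot)] + o(\|\delta K(\cdot)\|)$ along the fixed direction $\delta K$, so one should read the variation as homogeneous of degree one in the increment (the standard convention for the first variation / Gâteaux differential), and I would include a one-line remark to that effect so the argument is airtight. Everything else is bookkeeping: no compactness, no convergence of minimizing sequences, and no properties of the specific functional $J[K]$ from \eqref{conjugation functional} are needed here — the lemma is a general fact about functionals possessing a first variation, and that is exactly why it can be quoted freely when, later, one sets the variation to zero at a strong limit point to derive the maximum-principle-type necessary conditions.
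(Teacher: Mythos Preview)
Your proof is correct. The paper actually states this lemma without proof --- it is presented in the ``main idea'' subsection as a standard elementary fact (converting the first variation into an ordinary derivative) and then immediately moves on to Lemma~\ref{L2}. So there is no paper proof to compare against; your write-up supplies the missing argument, and it is the expected one: substitute $\epsilon\,\delta K$ for the increment in the defining expansion $\Delta J=\delta J[\delta K]+o(\|\delta K\|)$, use homogeneity of the first variation and of the norm, divide by $\epsilon$, and pass to the limit. Your flagged caveat about reading $\delta J[\cdot]$ as homogeneous of degree one in its argument is exactly the right thing to say to make the step $\delta J[\epsilon\,\delta K]=\epsilon\,\delta J[\delta K]$ rigorous, and with that remark the proof is complete.
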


Besides, we will use the fundamental variational principle (Lemma \ref{L2}) to derive the maximum principle (Section \ref{MAX}).
\begin{lemma}\label{L2}
Let $\xi (t)$ be a $k$-dimensional piecewise continuous vector valued function
defined on $[t_0,t_1 ]$. If for any $k$-dimensional piecewise continuous vector
valued function $\eta (t)$ defined on $[t_0,t_1 ]$, we have
\begin{equation*}
	\int_{t_0}^{t_1}\xi^T(t)\eta (t)dt=0,
\end{equation*}
then at all successive moments $\xi (t)=0$.
\end{lemma}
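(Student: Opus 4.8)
The plan is to argue by contradiction with a localized ``bump'' test function, which is the standard device behind the fundamental lemma of the calculus of variations. Suppose $\xi(t)$ does not vanish identically on the set of its continuity points; pick an instant $\tau\in[t_0,t_1]$ at which $\xi$ is continuous and $\xi(\tau)\neq 0$. Then some component is nonzero, say $\xi_j(\tau)=c\neq 0$; replacing $\xi$ by $-\xi$ if necessary, assume $c>0$. By continuity of $\xi_j$ at $\tau$ there is a subinterval $I\subset[t_0,t_1]$ containing $\tau$ (one-sided if $\tau$ is an endpoint) on which $\xi_j(t)\geq c/2>0$.

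Next I would exhibit an admissible $\eta$ that detects this. Set $\eta_i\equiv 0$ for $i\neq j$, and let $\eta_j$ be the continuous ``tent'' function that equals $1$ at $\tau$, decreases linearly to $0$ at the endpoints of $I$, and is $0$ outside $I$; this $\eta$ is certainly a $k$-dimensional piecewise continuous (indeed continuous) vector valued function on $[t_0,t_1]$, hence an allowed test function. With this choice $\xi^T(t)\eta(t)=\xi_j(t)\eta_j(t)\geq 0$ on all of $[t_0,t_1]$ and is $\geq (c/2)\,\eta_j(t)>0$ on the interior of $I$, so
\[
\int_{t_0}^{t_1}\xi^T(t)\eta(t)\,dt \;\geq\; \frac{c}{2}\int_{I}\eta_j(t)\,dt \;>\;0,
\]
which contradicts the hypothesis $\int_{t_0}^{t_1}\xi^T(t)\eta(t)\,dt=0$. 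Therefore $\xi(\tau)=0$ at every point of continuity $\tau$.

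Finally I would record the minor bookkeeping: since $\xi$ is piecewise continuous its discontinuities form a finite set, so ``$\xi(t)=0$ at all successive moments'' is precisely the assertion that $\xi$ vanishes on its (cofinite) set of continuity points. There is no genuine obstacle in this argument; the only points requiring a little care are that the chosen $\eta$ must lie in the admissible class (the tent construction clearly does), and that one uses a one-sided tent when $\tau$ coincides with $t_0$ or $t_1$.
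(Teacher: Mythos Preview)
Your argument is correct and is the standard proof of the fundamental lemma of the calculus of variations. The paper itself does not supply a proof of this lemma; it merely states it as a known preliminary result and invokes it later in deriving the maximum principle, so there is no ``paper's own proof'' to compare against. Your bump/tent construction is exactly the classical device one would expect here, and your handling of the endpoint case and the piecewise-continuity bookkeeping is appropriate.
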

\textbf{(v)} 
Through the chain rule of derivatives, we can solve for $K(t)$ and obtain its uniqueness (Section \ref{Uniqueness}).

If $K\in\mathcal{M}^n$ \footnote{$\M^n$ represents the $n$-order constant matrix.}, the minimizer $K^*$ is a constant matrix, two systems are linearly similar. Intuitively, when taking photos, constant matrix $K$ means that we only need to convert one camera to make the two photos linear similar. $K(t)$ means that we need to constantly change the camera over time and constantly carry out translation, rotation, expansion and contraction transformation, so as to make the two photos completely similar.

We propose the similarity theory for prediction based on Takens embedding theorem \cite{[Takens+81]} in Section \ref{embed}. In order to solve $K(t)$ and $K$, we give Algorithm \ref{algorithm1} and Algorithm \ref{algorithm2} in Section \ref{test} respectively.

\section{The (sufficient) existence of $K^*$}\label{Sufficient}
\ \ \ \ We want to know under what conditions the functional $J[K]$ reaches the minimum, that is, find the minimizer $K^*$ to make the functional reach the minimum.
\subsection{When $K^*\in\Omega$}\label{K in Omega}
\ \ \ \ In functional analysis, a general approach is to find a family of functions near the extreme point, and then take the convergent subsequence to converge to the extreme point. This requires some compactness of space.


However, for this infinite dimensional space $\Omega$, the boundedness is not necessarily completely-continuous, so compactness cannot be achieved. Generally, the compactness needs to use Arzel\`{a}-Ascoli lemma, 
but this is not available in our situation, because the family of functions cannot be equicontinuous. Therefore, we can only find the minimum of the functional in an approximate sense (approximate minimum). 
Combining Section \ref{The main idea} \textbf{(i)} and \textbf{(ii)}, we give the following theorem:
\begin{theorem}\label{Approximate minimum}
\textbf{(Approximate minimum)}
For all $\epsilon>0$, there exist $K^\epsilon\in\Omega$ and $K_0\in\mathcal{L}^2(\Omega)$ such that $K^\epsilon\rightharpoonup K_0$ \footnote{$``\rightharpoonup"$ means weak convergence, which can also be represented by symbols $``\overset{w}{\longrightarrow}"$.} and $J[K_0]=\underset{K\in\Omega}{\inf}~J[K]$.
\end{theorem}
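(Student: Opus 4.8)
The plan is to follow the direct method of the calculus of variations together with the Ekeland variational principle, as outlined in items \textbf{(i)} and \textbf{(ii)} of Section \ref{The main idea}. First I would fix $\epsilon>0$ and recall that $J[\cdot]\geq 0$ on $\Omega$, so that $m\triangleq\inf_{K\in\Omega}J[K]$ is a finite nonnegative number. By definition of infimum there exists $K\in\Omega$ with $J[K]\leq m+\epsilon$. Since $\Gamma_{x_0}$ and $\Gamma_{y_0}$ are bounded closed subsets of $\R^n$ (they are the continuous images of the compact interval $[0,T]$), every homeomorphism $K:\Gamma_{x_0}\to\Gamma_{y_0}$ is uniformly bounded: $\|K\|_\Omega=\sup_{x\in\Gamma_{x_0}}|K(x)|\leq \sup_{\tilde\rho\in\Gamma_{y_0}}|\tilde\rho|<\infty$. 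Hence any minimizing sequence, and in particular any $\epsilon$-approximate minimizer, lies in a fixed ball of $\L^2(\Gamma_{x_0};\R^n)$.

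Next I would apply Lemma \ref{Ekeland variational principle} to the complete metric space $(\overline\Omega, d)$ — where $\overline\Omega$ denotes the closure of $\Omega$ in the ambient $\L^2$ space, on which $J$ extends as a lower semi-continuous functional (as noted after \eqref{conjugation functional}) — with $V=\overline\Omega$, $F=J$, the chosen $u=K$ satisfying $F(u)\leq\inf F+\epsilon$, and $\lambda=1$. This produces a point $K^\epsilon$ with $J[K^\epsilon]\leq J[K]\leq m+\epsilon$, $d(K^\epsilon,K)\leq 1$, and the almost-critical-point inequality $J[K^\epsilon]\leq J[W]+\epsilon\,d(W,K^\epsilon)$ for all $W$. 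In particular $J[K^\epsilon]\to m$ as $\epsilon\to 0$, so $\{K^\epsilon\}$ is a minimizing sequence. Because it is bounded in the Hilbert space $\L^2$, by the Banach–Alaoglu / weak sequential compactness theorem there is a subsequence (still written $K^\epsilon$) and an element $K_0\in\L^2(\Omega)$ with $K^\epsilon\rightharpoonup K_0$. Finally, weak lower semi-continuity of $J$ — which follows because $J$ is convex in the sense needed, or more robustly because $K\mapsto\frac1T\int_0^T\|K(x(t,x_0))-y(t,K(x_0))\|^2\,dt$ is the composition of the weakly continuous evaluation/substitution operators with a convex continuous norm functional, hence weakly lower semi-continuous — gives $J[K_0]\leq\liminf_{\epsilon\to0}J[K^\epsilon]=m$. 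Since $m=\inf_{K\in\Omega}J[K]$ and $K_0$ is a (weak) limit of elements of $\Omega$, we get $J[K_0]=m=\inf_{K\in\Omega}J[K]$, which is the claim.

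The main obstacle, and the point I would treat most carefully, is the weak lower semi-continuity of $J$ at the limit $K_0$: the term $y(t,K(x_0))$ depends \emph{nonlinearly} on $K$ through the (generally nonlinear) flow $y(\cdot,\cdot)$ of the second system in \eqref{2nonlinear}, so one cannot simply invoke convexity. I would handle this by noting that weak $\L^2$ convergence of $K^\epsilon$ does not by itself control the pointwise value $K^\epsilon(x_0)$, so one must either (a) restrict to the admissible class with the pinned constraint $K(x_0)=y_0$ (as is in fact done in the similarity-degree formulas above, where the infimum is over $\{K\in\Omega\mid K(x_0)=y_0\}$), making $y(t,K(x_0))=y(t,y_0)$ a fixed function independent of $K$ and reducing $J$ to a genuinely convex functional of $K$; or (b) use the local Lipschitz dependence of the flow on initial data together with the uniform bound on $\Omega$ to pass to the limit in the nonlinear term along a further subsequence. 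Option (a) is cleanest and is consistent with the statement of the theorem, so I would adopt it: under the pinning constraint, $J[K]=\frac1T\int_0^T\|K(x(t,x_0))-\phi(t)\|^2\,dt$ with $\phi(t)\triangleq y(t,y_0)$ fixed, the substitution $K\mapsto K\circ x(\cdot,x_0)$ is a bounded linear (hence weakly continuous) map into $\L^2([0,T];\R^n)$, and $u\mapsto\frac1T\int_0^T\|u(t)-\phi(t)\|^2dt$ is convex and strongly continuous, therefore weakly lower semi-continuous — which closes the argument.
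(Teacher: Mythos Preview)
Your proof follows essentially the same route as the paper: infimum plus minimizing sequence, Ekeland variational principle, uniform boundedness from compactness of the trajectories, weak sequential compactness in $\mathcal{L}^2$, and lower semi-continuity to conclude $J[K_0]=\inf J$. The only differences are cosmetic---the paper takes $\lambda=\sqrt{\epsilon}$ rather than $\lambda=1$ and applies Ekeland directly on $\Omega$ (tacitly treating it as complete) rather than on its closure---except that the paper simply invokes ``lower-semi-continuity of functional and Fatou theorem'' without confronting the nonlinear dependence of $y(t,K(x_0))$ on $K$, so your discussion of the pinning constraint $K(x_0)=y_0$ is actually a refinement that addresses a point the paper glosses over.
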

\begin{proof}
Since functional $J[K]$ has a lower bound, it has an infimum. It means that there exists $\{K_m\}_{m=1}^{+\infty}\in\Omega$ s.t. $\underset{m\rightarrow\infty}{\lim}J[K_m]=\underset{K\in\Omega}{\inf}~J[K]$. $\{K_m\}_{m=1}^{+\infty}$ is called the minimizing sequence of functional $J[\cdot]$. In the light of the definition of limit, we obtain that $\forall ~\epsilon>0,~\exists ~N\in\N^+$, and when $m>N$,
\begin{equation}
  |J[K_m]-\underset{K\in\Omega}{\inf}~J[K]|\leq\epsilon\Rightarrow J[K_m]\leq\underset{K\in\Omega}{\inf}~J[K]+\epsilon.
\end{equation}
According to Ekeland variational principle, let $\lambda=\sqrt\epsilon$, we know that there exists $K^\epsilon\in \Omega$ s.t.
\begin{center}
$\left\{ \begin{aligned}
i)~&J[K^\epsilon]\leq J[K_m],\\
ii)~&\parallel K^\epsilon-K_m\parallel_{\Omega}\leq\sqrt\epsilon,\\
iii)~&J[K^\epsilon]\leq J[K]+\sqrt\epsilon\parallel K^\epsilon-K\parallel_{\Omega},\forall ~K\in\Omega.
\end{aligned}\right.$
\end{center}
When $\epsilon\rightarrow0$, 
$J[K^\epsilon]\leq J[K],~\forall ~K\in\Omega$. Therefore, $K^\epsilon$ satisfies the property of approximate minimum of functional $J[\cdot]$.

In combination with Section \ref{The main idea} \textbf{(i)}, $||K^\epsilon||_\Omega\leq M<+\infty$, $||K_m||_\Omega\leq M<+\infty$. In other words, they are contained in a closed ball of $\Omega$ respectively. Although $\Omega$ is not compact, it is weakly sequentially closed in the sense of $\L^2$. $\L^2(\Omega)$ is a Hilbert space.
So there exists a weakly convergent subsequence $\{K_{n_i}\}_{i=1}^{+\infty}$ s.t. $\underset{i\rightarrow\infty}{\lim}J[K_{n_i}]=\underset{K\in\Omega}{\inf}~J[K]$ and $K_{n_i}\overset{i\rightarrow\infty}{\rightharpoonup}$
$K_0,~K_0\in\L^2(\Omega)$. This means that $K^\epsilon\rightharpoonup K_0$ in $\L^2(\Omega)$. According to the lower-semi-continuity of functional and Fatou theorem 
, $J[K_0]\leq\underset{i\rightarrow\infty}{\underline{\lim}}~J[K_{n_i}]$, we have
\begin{equation}
  \underset{K\in\Omega}{\inf}~J[K]\leq J[K_0]\leq\underset{i\rightarrow\infty}{\underline{\lim}}~J[K_{n_i}]=\underset{i\rightarrow\infty}{\lim}~J[K_{n_i}]=\underset{K\in\Omega}{\inf}~J[K],
\end{equation}
i.e. 
$J[K_0]=\underset{K\in\Omega}{\inf}~J[K]$.

\end{proof}
It is significant to reiterate that the convergent subsequence can be extracted in $\Omega$, if $\Omega$ is a compact space (Section \ref{The main idea} \textbf{(iii)}).
This means that \textbf{the minimizer $K_0$} indeed exists, i.e. $K_0\in\Omega$.
For example, in polynomial space, 
the results corresponding to Theorem \ref{Approximate minimum} can be written as the following.
\begin{theorem}\textbf{(The minimizer in Polynomial space)}\label{Polynomial space's minimizer}
For a given positive integer $m$, let $P=\{K|K(x(t,x_0))=f_m(x(t,x_0))\}$ with the form
\begin{equation*}
f_m(x(\cdot))=\underset{\alpha_1+\alpha_2+\cdots+\alpha_n\leq m}{\sum}\left(\begin{aligned}
&a_{\alpha_1\alpha_2\cdots\alpha_n}^1\\
&a_{\alpha_1\alpha_2\cdots\alpha_n}^2\\
&\cdots\\
&a_{\alpha_1\alpha_2\cdots\alpha_n}^n
\end{aligned}\right)x_1^{\alpha_1}x_2^{\alpha_2}\cdots x_n^{\alpha_n}\triangleq \underset{\alpha_1+\alpha_2+\cdots+\alpha_n\leq m}{\sum}A^mx_1^{\alpha_1}x_2^{\alpha_2}\cdots x_n^{\alpha_n},
\end{equation*}
where $\alpha_1,\alpha_2,\cdots,\alpha_n\in\{0,1,\cdots,m\}$, $\{a_{\alpha_1\alpha_2\cdots\alpha_n}^j\}_{j=1}^n$ is coefficients of $x_1^{\alpha_1}x_2^{\alpha_2}\cdots x_n^{\alpha_n}$ for the $j$-th row, $x(\cdot)\in\R^n,~\{x_i\}_{i=1}^n$ is the $i$-th component of $x(\cdot)$. Then functionals (\ref{conjugation functional}) reaches minimum. Additionally, the similarity and semi-similarity between two systems can be realized in $\tilde{\Omega}=\Omega\cap P$. More precisely, $K_0\in\tilde{\Omega}$.
\end{theorem}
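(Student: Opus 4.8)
The plan is to reduce the problem to a finite-dimensional minimization and then invoke continuity plus properness to get a minimizer, which is exactly the situation in which Theorem~\ref{Approximate minimum} upgrades from an approximate to a genuine minimizer. The space $P$ of candidate maps $K$ is parametrized by the finitely many coefficient vectors $\{a^j_{\alpha_1\cdots\alpha_n}\}$, so $\tilde\Omega=\Omega\cap P$ sits inside the finite-dimensional vector space $V\cong\R^{n\cdot d(m,n)}$, where $d(m,n)=\#\{\alpha\in\N^n:\alpha_1+\cdots+\alpha_n\le m\}$. First I would observe that on the fixed compact trajectory $\Gamma_{x_0}\subset[0,T]\times\R^n$, the map sending a coefficient vector $a\in V$ to the function $f_m(x(\cdot))\in C(\Gamma_{x_0};\R^n)$ is continuous (indeed affine-linear), and then $a\mapsto J[f_m]$ is continuous as a composition of this map with the $L^2$-integral functional $K\mapsto\frac1T\int_0^T\|K(x(t,x_0))-y(t,K(x_0))\|^2\,dt$; continuity of the latter in the $C(\Gamma_{x_0})$-norm follows from boundedness of $\Gamma_{x_0}$, boundedness of the solution map $y_0\mapsto y(\cdot,y_0)$ on the compact relevant domain, and continuous dependence of solutions of the second ODE on initial data (guaranteed by the standing local-Lipschitz hypothesis on $g$). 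Hence $J$ is a continuous function on the finite-dimensional set $\tilde\Omega\subset V$.

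Next I would run the direct method exactly as in Theorem~\ref{Approximate minimum}, but now in finite dimensions. Take a minimizing sequence $\{K_k\}\subset\tilde\Omega$ with $J[K_k]\to\inf_{K\in\tilde\Omega}J[K]$. By step \textbf{(i)} of Section~\ref{The main idea}, since $\Gamma_{x_0},\Gamma_{y_0}$ are contained in homeomorphic closed balls, every $K\in\Omega$ satisfies $\|K\|_\Omega\le M$ for a fixed $M$; restricting to $K_k=f_m(x(\cdot))$ and evaluating at enough points of $\Gamma_{x_0}$ (using that a polynomial of degree $\le m$ in $n$ variables is determined by, and has its coefficients bounded by, its values on a fixed unisolvent finite subset of $\Gamma_{x_0}$ — here one uses that $\Gamma_{x_0}$ is a nondegenerate curve, not contained in any algebraic hypersurface of degree $\le m$, so it contains a unisolvent set) gives a uniform bound $|a_k|_V\le C(M,m,n)$ on the coefficient vectors. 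By Bolzano--Weierstrass a subsequence $a_{k_j}\to a_0$ in $V$; the corresponding $K_{k_j}\to K_0:=f_m^{(a_0)}$ strongly in $C(\Gamma_{x_0})$, hence $J[K_{k_j}]\to J[K_0]$ by the continuity established above, so $J[K_0]=\inf_{K\in\tilde\Omega}J[K]$. This is the strong-convergence shortcut advertised in step \textbf{(iii)}: unlike the general $\Omega$ case, here Arzel\`a--Ascoli (or simply finite-dimensionality) applies. It remains to check $K_0\in\tilde\Omega$, i.e. that $K_0$ is actually a homeomorphism $\Gamma_{x_0}\to\Gamma_{y_0}$; I would impose this as part of the definition of the admissible set and note that the homeomorphism constraint defines a subset of $V$ on which the limit stays (or, if one wants $\tilde\Omega$ closed, intersect with the closure and argue that the infimum is attained on the homeomorphism locus because the competitors in the minimizing sequence are homeomorphisms).

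The same argument with the pair $(K,R)$ ranging over $\tilde\Omega\times\tilde\Omega$ — a finite-dimensional set, on which $J[K,R]$ is again continuous and, by the same coefficient bounds, coercive along minimizing sequences — gives the semi-similarity statement; this is the ``completely analogous'' $J[K,R]$ case referred to after Definition~\ref{similarity}.

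The main obstacle I anticipate is not the compactness (finite dimensions make that routine) but the bookkeeping that keeps $\tilde\Omega$ from being \emph{empty} or from having its infimum pushed onto a boundary where $K_0$ degenerates: one must know that the polynomial family actually contains homeomorphisms $\Gamma_{x_0}\to\Gamma_{y_0}$ (true for $m$ large enough by the Stone--Weierstrass/Poincar\'e argument sketched in step \textbf{(i)}, approximating a given homeomorphism uniformly and then correcting), and that the coefficient-boundedness step genuinely works — this is where the nondegeneracy of the curve $\Gamma_{x_0}$ (its not lying on a low-degree algebraic variety) is essential, since otherwise the coefficients are not controlled by $\|K\|_\Omega$ and the minimizing sequence could run off to infinity in $V$ while staying bounded on $\Gamma_{x_0}$. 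I would handle this either by the unisolvent-set argument above or, more robustly, by replacing the coefficient norm with the intrinsic norm $\|f_m\|_{C(\Gamma_{x_0})}$ and invoking equivalence of norms on the finite-dimensional image space $\{f_m|_{\Gamma_{x_0}}:a\in V\}$, which is automatic once one checks the evaluation map $a\mapsto f_m|_{\Gamma_{x_0}}$ is injective.
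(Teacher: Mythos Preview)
Your proposal is correct and follows the same core idea as the paper: exploit that $P$ is finite-dimensional so that a bounded minimizing sequence in $\tilde\Omega$ has a strongly convergent subsequence, upgrading the approximate minimum of Theorem~\ref{Approximate minimum} to a genuine one. The paper's proof is considerably terser: it defines the coefficient map $F(f_m)=A^m$, invokes the weakly convergent subsequence from Theorem~\ref{Approximate minimum}, and then simply asserts that one can extract convergent subsequences of the (finitely many) coefficient vectors $A_i^m$, concluding $K_0\in\tilde\Omega$.

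Your execution is more careful in two respects worth noting. First, you argue directly in the finite-dimensional coefficient space $V$ via Bolzano--Weierstrass, rather than routing through weak convergence in $\mathcal L^2(\Omega)$ as the paper does; your route is cleaner, since the paper's step ``$K_{n_i}\rightharpoonup K_0$ therefore $F(K_{n_i})\to F(K_0)$'' is not justified and in fact needs exactly the finite-dimensionality you use. Second, you correctly flag two issues the paper passes over in silence: that boundedness of $\|K\|_\Omega$ controls the coefficient vector only if $\Gamma_{x_0}$ is not contained in a low-degree algebraic hypersurface (your unisolvent-set / equivalence-of-norms argument), and that closedness of the homeomorphism locus in $\tilde\Omega$ is needed to keep $K_0\in\tilde\Omega$. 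The paper does not address either point. So your argument is the same strategy, but with the gaps identified and a plan to close them.
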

\begin{proof}
Compared with the proof of Theorem \ref{Approximate minimum}, we only need to further prove that convergent subsequences can be extracted in $\tilde{\Omega}$.

Define $F(f_m(\cdot))=A^m$. 
According to Theorem \ref{Approximate minimum}, there exists a weakly convergent subsequence $\{K_{n_i}\}\in\Omega$ s.t. $K_{n_i}\overset{i\rightarrow\infty}{\rightharpoonup}K_0$. Therefore, $F(K_{n_i})\overset{i\rightarrow\infty}{\rightarrow}F(K_0)$, i.e. $A_{n_i}^m\triangleq A_i^m\overset{i\rightarrow\infty}{\rightarrow}A_0$. Then we only need to extract subsequences from $A_i^m$.

For a fixed $i$, $\{A_i^m\}$ is finite, thus there must be a convergent subsequence. Aggregate all of them and record them as $\{A_{i(j)}\}_{j=1}^\infty$, we obtain the convergent subsequences $A_{i(j)}\overset{j\rightarrow\infty}{\rightarrow}A_0$, and this implies $K_0\in\tilde{\Omega}$.
\end{proof}
\subsection{When $K^*(t)\in GL(n)\cap\Omega$}\label{K(t) in GL(n)}
\ \ \ \ As mentioned in Section \ref{The main idea} \textbf{(iii)}, we consider the homeomorphism $K(t)\in GL(n)\cap\Omega$. The cost functional (\ref{conjugation functional}) changed into
\begin{equation}
J_1[K(\cdot)]\triangleq\dfrac{1}{T} \int_{0}^{T}||K(t)x(t,x_0 )-y(t,K(0)x_0)||^2dt.
\end{equation}
\begin{theorem}\label{exists in GL(n)}
There exists a homeomorphism $K_0(t)\in GL(n)\cap\Omega$ such that $J_1[K_0(\cdot)]=0$.
\end{theorem}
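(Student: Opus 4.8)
The plan is to construct $K_0(t)$ explicitly so that the integrand of $J_1[K_0(\cdot)]$ vanishes identically, rather than to argue abstractly for a minimizer. The key observation is that $J_1[K(\cdot)] = 0$ is equivalent to the pointwise identity $K(t)x(t,x_0) = y(t, K(0)x_0)$ for all $t \in [0,T]$. So I would first fix the value $K(0) =: K_0 \in GL(n)$ — say $K_0 = I_n$ for concreteness, or any invertible matrix for which the resulting construction stays in $GL(n)$ — thereby fixing the initial point $y_0 = K(0)x_0 = x_0$ on the $y$-trajectory, and hence fixing the curve $t \mapsto y(t, K(0)x_0)$ entirely.

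Next I would solve the defining equation for $K(t)$. Since $x(t,x_0) \in \R^n$ is a given vector for each $t$, I would choose $K(t)$ to be (for instance) the rank-one-corrected identity or, more simply, any smooth family of invertible matrices satisfying $K(t)\,x(t,x_0) = y(t, x_0)$. Concretely, writing $x(t) := x(t,x_0)$ and $y(t) := y(t,x_0)$, one can take
\begin{equation*}
K(t) \;=\; I_n \;+\; \frac{\big(y(t) - x(t)\big)\,x(t)^{T}}{\,x(t)^{T} x(t)\,},
\end{equation*}
defined wherever $x(t) \neq \mathbf{0}$, which indeed sends $x(t) \mapsto y(t)$; one checks $\det K(t) = 1 + x(t)^T(y(t)-x(t))/|x(t)|^2 = x(t)^T y(t)/|x(t)|^2$, so $K(t) \in GL(n)$ precisely when $x(t)^T y(t) \neq 0$. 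For this to work uniformly on $[0,T]$ — and to handle the possibility that $x(t)$ passes through the origin or that $x(t) \perp y(t)$ — I would instead complete $x(t)/|x(t)|$ and $y(t)/|y(t)|$ to orthonormal frames and define $K(t)$ as the corresponding rotation-and-scaling map, i.e. $K(t) = |y(t)|/|x(t)| \cdot O(t)$ with $O(t) \in SO(n)$ a smoothly chosen rotation carrying $x(t)$ to a positive multiple of $y(t)$; such a smooth choice exists on the compact interval $[0,T]$ by the usual frame-interpolation argument and is invertible as long as $x(t),y(t) \neq \mathbf{0}$. Finally I would verify that the resulting $K_0(t)$ lies in $\Omega$, i.e. that it is a homeomorphism $\Gamma_{x_0} \to \Gamma_{y_0}$: for each $t$ it is a linear isomorphism of $\R^n$, hence a homeomorphism, and its restriction maps the point $x(t,x_0)$ of $\Gamma_{x_0}$ to the point $y(t,x_0)$ of $\Gamma_{y_0}$, giving a bijective bicontinuous correspondence of the trajectories by Poincar\'e's theorem (as invoked in Section \ref{The main idea} \textbf{(i)}). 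Since $K_0(t) x(t,x_0) = y(t, K_0(0)x_0)$ holds for all $t$, the integrand of $J_1$ is identically zero, so $J_1[K_0(\cdot)] = 0$.

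The main obstacle is the degeneracy set: the construction of an invertible $K(t)$ sending $x(t,x_0)$ to $y(t,x_0)$ can fail at instants where $x(t,x_0) = \mathbf{0}$ (or $y(t,x_0) = \mathbf{0}$), and more subtly one must choose the family $K(t)$ to depend continuously — indeed homeomorphically, as an element of $\Omega$ — on $t$ across such instants and across any topological obstruction to interpolating frames. I expect the cleanest route is to exploit that, having fixed $K(0)$, we only ever need $K(t)$ to act correctly on the single vector $x(t,x_0)$, so the freedom is enormous; a partition-of-unity / polar-decomposition argument on $[0,T]$ should produce a smooth (hence continuous) invertible family, and one then only needs the mild genericity that the trajectories avoid the origin, or else one shifts coordinates as in the footnote to the Hartman-Grobman discussion so that $\mathbf{0}$ is not on the trajectory. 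Everything else is routine verification.
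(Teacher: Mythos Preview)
Your approach is correct in spirit and shares with the paper the central idea: construct $K(t)$ explicitly as a scaling-times-rotation carrying $x(t,x_0)$ to $y(t,K(0)x_0)$, so that the integrand vanishes identically. The paper, however, takes a more elaborate route: it first approximates both solution curves by Euler polylines $\varphi_m,\psi_m$, builds $K_0(t)$ \emph{piecewise} on each segment $[\tau_l,\tau_{l+1}]$ by an explicit product of planar rotations $P_l=Q_l^{-1}R_l$ (reducing a unit vector to $(0,\dots,0,1)^T$ one coordinate at a time) together with the scale factor $\|\mathbf{g}_l\|/\|\mathbf{f}_l\|$, and then passes to the limit $m\to\infty$ via Arzel\`a--Ascoli. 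Your direct construction on the actual trajectories is analytically cleaner and avoids the limiting argument; the paper's piecewise-polyline version is closer to the discrete algorithm used later in Section~\ref{test}.

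The one place where your proposal is weaker than the paper is the handling of the degeneracy $x(t)=\mathbf{0}$ (or $y(t)=\mathbf{0}$). You suggest a coordinate shift, but a translation does not interact well with the requirement $K(t)\in GL(n)$: if $\tilde K(t)(x(t)-c)=y(t)-c$ with $\tilde K(t)$ linear, this does not in general yield a linear map sending $x(t)$ to $y(t)$. The paper's fix is instead to \emph{augment} the state by the time variable, replacing $x$ by $(x,t)$ and $f$ by $(f,1)$, so that the relevant vectors are never zero; the construction then lives in $GL(n+1)$ and one reads off the desired map on the original coordinates. If you adopt that augmentation (or simply assume, as is generic, that neither trajectory meets the origin on $[0,T]$), your rotation-and-scaling formula $K(t)=\frac{|y(t)|}{|x(t)|}\,O(t)$ goes through, and continuity of $O(t)$ follows since for $n\geq 2$ one can take $O(t)$ to be the rotation in the $2$-plane $\mathrm{span}\{x(t),y(t)\}$ (with a standard workaround at antipodal instants).
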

\begin{proof}
$GL (n)$ is a complete metric space which satisfies the condition in Theorem \ref{Approximate minimum}. $K(t)$ contains $n^2$ elements $k_{ij}(t)~(i, j = 1,\cdots, n):\R\rightarrow\R$, $GL (n)$ has $n^2$ bases. In fact, any finite dimensional space is isomorphic. According to Theorem \ref{Polynomial space's minimizer}, there exists extremum point $K_0(t),~K_0(t)\in GL(n)$, and $K_0(t)$ is bounded.

To prove that $J_1[K_0(\cdot)]=0$, geometrically, it suffices to prove that $K_0$ maps the solution curve of $(x(t),x_0)$ to the solution curve of $(y(t),y_0)$. In order to find the $K_0$, we divide into three steps: (i) The Euler polyline is used to approximate the solution curve; (ii) $K_0(t)$ is solved by making the functional be $0$ on the Euler polyline; (iii) Certificate the uniform convergence of the Euler polyline.\\

\textbf{Step 1.} Firstly, we introduce the time variable to ensure that the direction vector at each point of the solution curve is not equal to $\mathbf{0}$:
$$
	\left\{ \begin{aligned}
		\overset{.}x(t) &=f(t,x(t)),\\
		          \dot{t}&=1, \\ \end{aligned}  \right.~
	\left\{  \begin{aligned}
		\overset{.}y(t) &=g(t,y(t)),\\
	             \dot{t}&=1. \\  \end{aligned}  \right.
$$
Then $X=(x,t)\in\R^{n+1},Y=(y,t)\in\R^{n+1}$, and $F=(f,1),G=(g,1)$. For simplicity, we will still consider $(x, y, f, g)$ instead of $(X, Y, F, G)$.

Divide $[0, T]$ into $m$ segments, each segment is $h_m=\frac{T}{m}$ long. Let $\tau_k=kh_m,~k=0,1,\cdots,m$,
\begin{equation}\begin{aligned}
&\xi_0=x_0,\quad\frac{\xi_{k+1}-\xi_k}{\tau_{k+1}-\tau_k}=f(\tau_k,\xi_k),\quad i.e.\quad\xi_{k+1}=\xi_k+h_mf(\tau_k,\xi_k),\\
&\eta_0=y_0,\quad\frac{\eta_{k+1}-\eta_k}{\tau_{k+1}-\tau_k}=g(\tau_k,\eta_k),\quad i.e.\quad\eta_{k+1}=\eta_k+h_mg(\tau_k,\eta_k).
\end{aligned}
\end{equation}
Thereupon, we get $2(m+1)$ discrete points: $$(\tau_0,\xi_0),(\tau_1,\xi_1),\cdots,(\tau_m,\xi_m);~(\tau_0,\eta_0),(\tau_1,\eta_1),\cdots,(\tau_m,\eta_m).$$
Connect two adjacent points with straight lines in turn to obtain two continuous Euler polylines:
\begin{equation}\begin{aligned}
&\varphi_m(t)=\left\{\begin{aligned}
x_0+f(\tau_0,\xi_0)t,\qquad&t\in[0,\tau_1],\\
x_0+h_m\underset{k=0}{\overset{l-1}{\Sigma}}f(\tau_k,\xi_k)+f(\tau_l,\xi_l)(t-\tau_l),~&t\in(\tau_l,\tau_{l+1}],l=1,\cdots,m-1,
\end{aligned}\right.\\
&\psi_m(t)=\left\{\begin{aligned}
y_0+g(\tau_0,\eta_0)t,\qquad&t\in[0,\tau_1],\\
y_0+h_m\underset{k=0}{\overset{l-1}{\Sigma}}g(\tau_k,\eta_k)+g(\tau_l,\eta_l)(t-\tau_l),~&t\in(\tau_l,\tau_{l+1}],l=1,\cdots,m-1.
\end{aligned}\right.
\end{aligned}
\end{equation}

\textbf{Step 2.} Seek $K_0(t)$ s.t. $K_0(t)\varphi_m(t)=\psi_m(t)$. Without losing generality, we translate the above $2(m+1)$ discrete points and their own direction vectors to the origin $O$, and then consider the rotation matrix of the direction vectors.

When $t\in[0,\tau_1]$, $K_0(t)=K_0$,
\begin{equation}\begin{aligned}
&\varphi_m(t)=x_0+f(\tau_0,\xi_0)t\triangleq x_0+\textbf{f}_0t,\\
&\psi_m(t)=y_0+g(\tau_0,\eta_0)t\triangleq y_0+\textbf{g}_0t,
\end{aligned}
\end{equation}
where $\textbf{f}_0\neq\mathbf{0},~\textbf{g}_0\neq\mathbf{0}$ are direction vectors of $\varphi_m(t),~\psi_m(t)$ respectively,
\begin{equation*}\begin{aligned}
&\textbf{f}_0=||\textbf{f}_0||(\cos{\alpha_0^1},\cos{\alpha_0^2},\cdots,\cos{\alpha_0^n})^T\triangleq||\textbf{f}_0||(u_0^1,u_0^2,\cdots,u_0^n)^T\triangleq||\textbf{f}_0||\textbf{u}_0,\\ &\textbf{g}_0=||\textbf{g}_0||(\cos{\beta_0^1},\cos{\beta_0^2},\cdots,\cos{\beta_0^n})^T\triangleq||\textbf{g}_0||(v_0^1,v_0^2,\cdots,v_0^n)^T\triangleq||\textbf{g}_0||\textbf{v}_0, \end{aligned}
\end{equation*}
where $\alpha_0^i,~\beta_0^i,~i=1,2,\cdots,n$, represent the angles between $\textbf{f}_0,~\textbf{g}_0$ and the coordinate axis. Leveraging the fact that $\underset{i=1}{\overset{n}{\Sigma}}\cos^2{\alpha_0^i}=1,~\underset{i=1}{\overset{n}{\Sigma}}\cos^2{\beta_0^i}=1$, $\textbf{u}_0,~\textbf{v}_0$ are unit vectors. Now let us start to calculate the rotation matrix $P_0$ of unit vector $\textbf{u}_0$ to unit vector $\textbf{v}_0$.

A rotation can be regarded as rotation around an axis on a plane. Figure \ref{rotation} shows the vector rotation process when $n=3$. Without losing generality, we first assume that $\textbf{u}_0$ rotates on the plane formed by $u_0^1\times u_0^2$, the rotation angle is $\theta_1,~\cos{\theta_1}=\frac{u_0^2}{\sqrt{(u_0^1)^2+(u_0^2)^2}},~\sin{\theta_1}=\frac{u_0^1}{\sqrt{(u_0^1)^2+(u_0^2)^2}}$, which can be expressed as:
\begin{figure}[htbp]
\centering
	\includegraphics[width=0.75\linewidth]{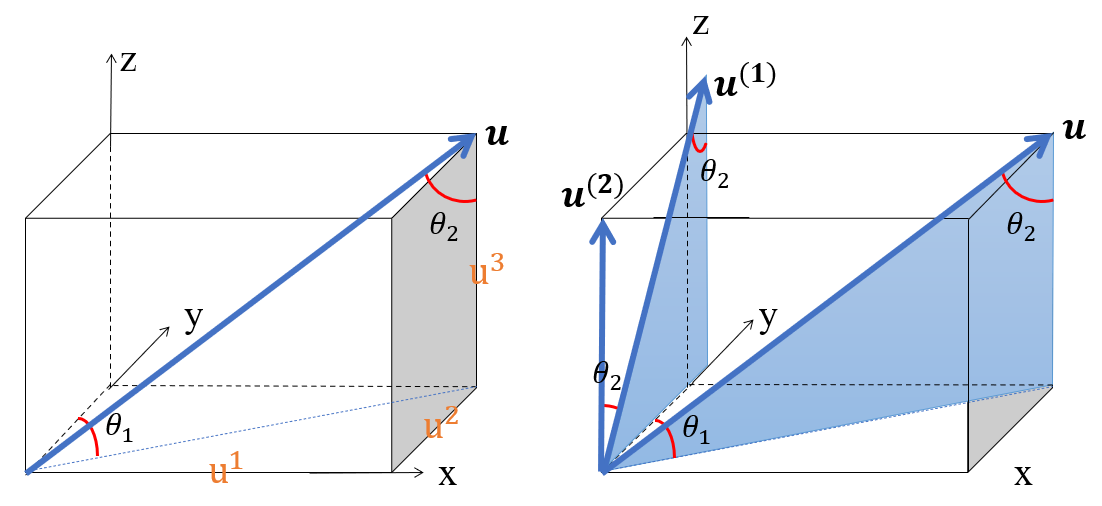}
	\caption{\small{Vector rotation process when $n=3$.}}
{\small{
As illustrated in the figure, the blue plane rotates $\theta_1$ to get the vector $u^{(1)}$. Then $u^{(1)}$ rotates $\theta_2$ to get the vector $u^{(2)}$.}}
	\label{rotation}
\end{figure}
\small{\begin{equation*}
\textbf{u}_0^{(1)}=R_1\textbf{u}_0=\(
\begin{matrix}
\cos{\theta_1}&-\sin{\theta_1}&&&\\
\sin{\theta_1}&\cos{\theta_1}&&&\\
&&1&&\\
&&&\ddots&\\
&&&&1
\end{matrix}\)
\(\begin{matrix}
u_0^1\\
u_0^2\\
u_0^3\\
\vdots\\
u_0^n
\end{matrix}\)=
\(\begin{matrix}
0\\
\sqrt{(u_0^1)^2+(u_0^2)^2}\\
u_0^3\\
\vdots\\
u_0^n
\end{matrix}\).
\end{equation*}}
Deduce the rest, $\textbf{u}_0^{(1)}$ rotates on the plane formed by $u_0^{2'}\times u_0^{3'}$, the rotation angle is $\theta_2,~\cos{\theta_2}=\frac{u_0^3}{\sqrt{(u_0^1)^2+(u_0^2)^2+(u_0^3)^2}},~\sin{\theta_2}=\frac{\sqrt{(u_0^1)^2+(u_0^2)^2}}{\sqrt{(u_0^1)^2+(u_0^2)^2+(u_0^3)^2}}$,
\small{\begin{equation*}\begin{aligned}
\textbf{u}_0^{(2)}&=R_2\textbf{u}_0^{(1)}=\(
\begin{matrix}
1&&&&&\\
&\cos{\theta_2}&-\sin{\theta_2}&&&\\
&\sin{\theta_2}&\cos{\theta_2}&&&\\
&&&1&&\\
&&&&\ddots&\\
&&&&&1
\end{matrix}\)
\(\begin{matrix}
0\\
\sqrt{(u_0^1)^2+(u_0^2)^2}\\
u_0^3\\
u_0^4\\
\vdots\\
u_0^n
\end{matrix}\)=
\(\begin{matrix}
0\\
0\\
\sqrt{(u_0^1)^2+(u_0^2)^2+(u_0^3)^2}\\
u_0^4\\
\vdots\\
u_0^n
\end{matrix}\),\\
&\cdots\cdots
\end{aligned}
\end{equation*}
\begin{equation*}\begin{aligned}
\textbf{u}_0^{(n-1)}=R_{n-1}\textbf{u}_0^{(n-2)}&=\(
\begin{matrix}
1&&&&\\
&\ddots&&&\\
&&1&&\\
&&&\cos{\theta_{n-1}}&-\sin{\theta_{n-1}}\\
&&&\sin{\theta_{n-1}}&\cos{\theta_{n-1}}
\end{matrix}\)
\(\begin{matrix}
0\\
\vdots\\
0\\
\sqrt{(u_0^1)^2+(u_0^2)^2+\cdots+(u_0^{n-1})^2}\\
u_0^n
\end{matrix}\)\\
&=\(\begin{matrix}
0\\
\vdots\\
0\\
\sqrt{(u_0^1)^2+(u_0^2)^2+\cdots+(u_0^n)^2}
\end{matrix}\)=\(\begin{matrix}
0\\
\vdots\\
0\\
1
\end{matrix}\).
\end{aligned}
\end{equation*}}
Let $R_0=R_{n-1}\cdots R_2R_1$, $R_0\textbf{u}_0=(0,\cdots,0,1)^T$. Then $R_0$ is invertible, and its inverse is $R_0^{-1}=R_0^T=R_1^TR_2^T...R_{n-1}^T$. Note that $R_i,~i=1,\cdots ,n-1$, are ordered and not exchangeable, and
each $R_i$ is an orthogonal matrix. Similarly, $\exists ~Q_0=Q_{n-1}\cdots Q_2Q_1$, $Q_0\textbf{v}_0=(0,\cdots ,0,1)^T$. $Q_0$ is invertible, and its inverse is $Q_0^{-1}=Q_0^T=Q_1^TQ_2^T\cdots Q_{n-1}^T$. $R_0\textbf{u}_0=Q_0\textbf{v}_0=(0,\cdots ,0,1)^T$, let $P_0=Q_0^{-1}R_0$, then $P_0\textbf{u}_0=\textbf{v}_0$, $P_0$ is invertible. Let $K_0=\frac{||\textbf{g}_0||}{||\textbf{f}_0||}P_0t$, consequently,
\begin{equation}
K_0(x_0+\textbf{f}_0t)=y_0+\textbf{g}_0t,K_0x_0=y_0~(\text{translation}),~K_0f_0=g_0~(\text{rotation}).
\end{equation}

When $t\in(\tau_l,\tau_{l+1}], ~K_0(t)=K_l,~l=1,\cdots ,m-1$,
\begin{equation}\begin{aligned}
&\varphi_m(t)=x_0+h_m\underset{k=0}{\overset{l-1}{\Sigma}}f(\tau_k,\xi_k)+f(\tau_l,\xi_l)(t-\tau_l)\triangleq \textbf{f}_lt+x_0+h_m\underset{k=0}{\overset{l-1}{\Sigma}}\textbf{f}_k-\textbf{f}_l\tau_l,\\
&\psi_m(t)=y_0+h_m\underset{k=0}{\overset{l-1}{\Sigma}}g(\tau_k,\eta_k)+g(\tau_l,\eta_l)(t-\tau_l)\triangleq \textbf{g}_lt+y_0+h_m\underset{k=0}{\overset{l-1}{\Sigma}}\textbf{g}_k-\textbf{g}_l\tau_l,
\end{aligned}
\end{equation}
where $\textbf{f}_l\neq\mathbf{0},~\textbf{g}_l\neq\mathbf{0}$ are direction vectors of $\varphi_m(t),~\psi_m(t)$ respectively,
\begin{equation*}\begin{aligned}
\textbf{f}_l&=||\textbf{f}_l||(\cos{\alpha_l^1},\cos{\alpha_l^2},\cdots ,\cos{\alpha_l^n})^T\triangleq||\textbf{f}_l||(u_l^1,u_l^2,\cdots ,u_l^n)^T\triangleq||\textbf{f}_l||\textbf{u}_l, \\ \textbf{g}_l&=||\textbf{g}_l||(\cos{\beta_l^1},\cos{\beta_l^2},\cdots ,\cos{\beta_l^n})^T\triangleq||\textbf{g}_l||(v_0^1,v_l^2,\cdots ,v_l^n)^T\triangleq||\textbf{g}_l||\textbf{v}_l,
\end{aligned}
\end{equation*}
where $\alpha_l^i,~\beta_l^i,~i=1,2,\cdots ,n$, represent the angles between $\textbf{f}_l,~\textbf{g}_l$ and the coordinate axis. Leveraging the fact that $\underset{i=1}{\overset{n}{\Sigma}}\cos^2{\alpha_l^i}=1,~\underset{i=1}{\overset{n}{\Sigma}}\cos^2{\beta_l^i}=1$, $\textbf{u}_l,~\textbf{v}_l$ are unit vectors.

Use the same rotation transformation, there exist
orthogonal rotation matrices $R_l,~Q_l$ s.t. $R_l\textbf{u}_l=Q_l\textbf{v}_l=(0,\cdots ,0,1)^T$. Let $P_l=Q_l^{-1}R_l$, then $P_l\textbf{u}_l=\textbf{v}_l$, $P_l$ is invertible. Let $K_l=\frac{||\textbf{g}_l||}{||\textbf{f}_l||}P_lt$, consequently,
\begin{equation}\left\{\begin{aligned}
&K_l(x_0+h_m\underset{k=0}{\overset{l-1}{\Sigma}}\textbf{f}_k+\textbf{f}_l(t-\tau_l))=y_0+h_m\underset{k=0}{\overset{l-1}{\Sigma}}\textbf{g}_k+\textbf{g}_l(t-\tau_l),\\
&K_l(x_0+h_m\underset{k=0}{\overset{l-1}{\Sigma}}\textbf{f}_k-\textbf{f}_l\tau_l)=y_0+h_m\underset{k=0}{\overset{l-1}{\Sigma}}\textbf{g}_k-\textbf{g}_l\tau_l~(\text{translation}),\\
&K_l\mathbf{f_l}=\mathbf{g_l}~(\text{rotation}).
\end{aligned}\right.\end{equation}
Thereupon, $K_0(t)=K_l$, when $t\in[\tau_l,\tau_{l+1}],~l=0,\cdots ,m-1$. It is noteworthy that 
$K_l,l=0,1,\cdots,m-1$ is bounded, reversible, and its inverse $K_l^{-1}$ is bounded. So $K_0(t)$ is the homeomorphism (i.e. $K_0(t)\in GL(n)\cap\Omega$). \\

\textbf{Step 3.} Define $\triangle_m^1(t)=\varphi_m(t)-(x_0+\int_{0}^{T}f(s,\varphi_m(s))ds),~\triangle_m^2(t)=\psi_m(t)-(y_0+\int_{0}^{T}g(s,\psi_m(s))ds)$.
It bears out that these two differences converge uniformly by Arzel$\grave{a}$-Ascoli lemma, see \cite{[WL+12]}: $\triangle_m^1(t)\underset{1}{\overset{m\rightarrow\infty}{\longrightarrow}}0,~\triangle_m^2(t)\underset{1}{\overset{m\rightarrow\infty}{\longrightarrow}}0$. \footnote{$f_m(x)\underset{1}{\overset{m\rightarrow\infty}{\longrightarrow}}f_0(x)$ means that $f_m(x)$ uniformly (stronger than pointwise) converges to $f_0(x)$, which is independent of $x$.}
This means that there exist convergent subsequences which are still recorded as $\varphi_m(t),\psi_m(t)$, s.t. $\varphi_m(t)\underset{1}{\overset{m\rightarrow\infty}{\longrightarrow}}x(t),~\psi_m(t)\underset{1}{\overset{m\rightarrow\infty}{\longrightarrow}}y(t)$, thus $K_0(t)x(t)=y(t)$. $K_0(t)$ depends on continuous  $\varphi_m(t),~\psi_m(t)$, so $K_0(t)$ is continuous. Therefore,
\begin{equation}\begin{aligned}
J_1[K_0(\cdot)]&=\dfrac{1}{T} \int_{0}^{T}||K_0(t)x(t,x_0 )-y(t,K_0(0)x_0)||^2dt\\
&=\dfrac{1}{T} \int_{0}^{T}||K_0(t)x(t,x_0 )-K_0(t)\varphi_m(t)+K_0(t)\varphi_m(t)-\psi_m(t)+\psi_m(t)-y(t,K_0(0)x_0)||^2dt\\
&\leq C_1\int_{0}^{T}||x(t,x_0)-\varphi_m(t)||^2dt+C_2\int_{0}^{T}||K_0(t)\varphi_m(t)-\psi_m(t)||^2dt+C_3\int_{0}^{T}||\psi_m(t)-y(t,y_0)||^2dt\\
&=I_1+C_2\sum_{k=0}^{m-1}\int_{\tau_k}^{\tau_{k+1}}||K_0(t)\varphi_m(t)-\psi_m(t)||^2dt+I_3~\overset{m\rightarrow\infty}{\longrightarrow}0.
\end{aligned}
\end{equation}
\end{proof}
Let $D_1,D_2$ be two disjoint bounded regions in $\R^n$, $x_0\in D_1,y_0\in D_2$.
\begin{proposition}\label{K*(t,x)}
There exists a homeomorphism $K^*(t,x_0)\in \Omega$ such that $J[K^*(\cdot)]=0$.
\end{proposition}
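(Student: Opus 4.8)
The plan is to build $K^*$ by hand so that it carries the entire solution curve of the $x$-system onto that of the $y$-system, matched in time, which makes the integrand of (\ref{conjugation functional}) vanish pointwise. As in the proof of Theorem \ref{exists in GL(n)}, I would first pass to the time-augmented systems $X=(x,t)$, $Y=(y,t)$ in $\R^{n+1}$ (appending $\dot t=1$), so that the parametrizations $t\mapsto x(t,x_0)$ and $t\mapsto y(t,y_0)$ become injective — the extra coordinate rules out self-intersections and periodicity — and the trajectories $\Gamma_{x_0}$, $\Gamma_{y_0}$ are simple arcs; by the topological fact already used in Section \ref{The main idea} \textbf{(i)} (Poincar\'{e}'s theorem) any two simple arcs are homeomorphic. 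The hypothesis that $D_1$ and $D_2$ are disjoint ensures that $\Gamma_{x_0}\subset D_1$ and $\Gamma_{y_0}\subset D_2$ are disjoint, so the curve-to-curve map below is genuinely a map between two distinct sets (and an ambient extension, if wanted, is unobstructed).

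Concretely, define $K^*:\Gamma_{x_0}\to\Gamma_{y_0}$ by $K^*\big(x(t,x_0)\big):=y(t,y_0)$ for $t\in[0,T]$; equivalently $K^*=\gamma_y\circ\gamma_x^{-1}$, where $\gamma_x(t)=x(t,x_0)$ and $\gamma_y(t)=y(t,y_0)$. Well-definedness and bijectivity are immediate from injectivity of $\gamma_x,\gamma_y$. For continuity, note that $\gamma_x$ is a continuous bijection from the compact interval $[0,T]$ onto the Hausdorff space $\Gamma_{x_0}$, hence a homeomorphism, so $\gamma_x^{-1}$ is continuous; composing with the continuous map $\gamma_y$ shows $K^*$ is continuous, and interchanging the roles of $x$ and $y$ shows $(K^*)^{-1}=\gamma_x\circ\gamma_y^{-1}$ is continuous. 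Thus $K^*$ is a homeomorphism, i.e. $K^*(\cdot,x_0)\in\Omega$.

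It then remains to evaluate the functional. By construction $K^*(x_0)=\gamma_y(0)=y_0$, so $y(t,K^*(x_0))=y(t,y_0)=K^*(x(t,x_0))$ for every $t\in[0,T]$, whence $\|K^*(x(t,x_0))-y(t,K^*(x_0))\|\equiv 0$ and (\ref{conjugation functional}) gives $J[K^*(\cdot)]=0$. If one prefers $K^*$ defined on all of $D$ (or of $\R^n$) rather than only on the arc $\Gamma_{x_0}$, I would extend the curve-to-curve homeomorphism to an ambient homeomorphism using tubular neighbourhoods of the two disjoint simple arcs together with a Schoenflies/Tietze-type argument; this leaves $J$ unchanged, since $J$ depends only on the restriction of $K^*$ to $\Gamma_{x_0}$.

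The step I expect to be the real obstacle is the injectivity of the orbit parametrizations: a genuine orbit of (\ref{2nonlinear}) in $\R^n$ can cross itself or close up, in which case $\Gamma_{x_0}$ and $\Gamma_{y_0}$ need not be homeomorphic as subsets of $\R^n$ and the naive $K^*$ is ill-defined. This is precisely why the time-augmentation must be performed first; once it is done, everything else is routine point-set topology (a continuous bijection from a compact space to a Hausdorff space is a homeomorphism) together with a one-line computation of the functional.
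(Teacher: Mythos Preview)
Your argument is correct and more direct than the paper's. You bypass the Euler-polyline/rotation machinery of Theorem \ref{exists in GL(n)} entirely and build $K^*$ as $\gamma_y\circ\gamma_x^{-1}$, relying only on the elementary fact that a continuous bijection from a compact interval to a Hausdorff space is a homeomorphism (after time-augmenting to force injectivity of the parametrizations). The paper instead recycles the piecewise-constant matrices $K_l\in GL(n)$ already constructed in Theorem \ref{exists in GL(n)} and precomposes them with a domain homeomorphism $H:D_1\to D_2$ (obtained from Poincar\'{e}'s theorem) sending $x_0\mapsto y_0$, setting $K^*(t,x_0)=K_l\circ H$ on each subinterval $[\tau_l,\tau_{l+1}]$; it then appeals to continuous dependence of solutions on initial data to argue that $K^*$ is also continuous in $x_0$. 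Your approach is self-contained and proves precisely what the proposition asserts; the paper's route buys an extra conclusion --- continuity of $K^*$ in $x_0$ across all of $D_1$, and retention of the $GL(n)$-valued structure --- at the price of depending on the previous theorem.
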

\begin{proof}
On the basis of Theorem \ref{exists in GL(n)}, the proof is very direct.
Firstly, there exists a homeomorphism $H:D_1\rightarrow D_2$: $\forall~y_0\in D_2,~\exists$ a unique $x_0$ s.t. $H(x_0)=y_0,~x_0=H^{-1}(y_0)$ (Poincar\'{e} Theorem \cite{[Poincare]}).

Let $K^*(t,x_0)=K_l\circ H$, when $t\in[\tau_l,\tau_{l+1}],~l=0,\cdots ,m-1$. According to the continuous dependence of the solution on the initial value, $K_l$ is continuous for $x_0$, so $K^*(t,x_0)$ is also continuous for $x_0$. Then $K^*(t,x_0)$ is also a homeomorphism (i.e. $K^*(t,x_0)\in\Omega$), and $J[K^*(\cdot)]=0$.
\end{proof}

Further, we will consider $x(t_1)=x_1,y(t_1)=y_1$ on the trajectories $x(t),y(t)$ when the terminal $t=t_1\in[0,T]$ is given. We will discuss the form of $K^*$ when $K^*x_1=y_1$ in the next section.
\subsection{When the terminal is given}\label{When K in H}
\ \ \ \ Review the conjugate system $K^*(e^{At}x_0)=y(t,K^*(x_0))$. For (\ref{Hartman y}) and (\ref{Hartman x}), Proposition \ref{Hartman} gives the form of $K^*$ when the nonlinear term $r$ has a small Lipschitz constant $|r|_{Lip}$ in (\ref{r Lipschitz}).
The homeomorphism $K^*=I_X+g\in\Omega$ is not explicitly represented by $t$, but is implicitly represented by $x (t)$, where $g(x)=\int_\R G_A(s)r(e^{-As}x+g(e^{-As}x))ds$.
For (\ref{2nonlinear}), Theorem \ref{exists in GL(n)} explicitly expresses $K^*$ as $K^*(t)$ and proves the form of $K^*(t)\in GL(n)\cap\Omega$.

When the terminal $t=t_1\in[0,T]$ is given, we study $x(t_1)=x_1,y(t_1)=y_1$ on the trajectories $x(t),y(t)$, the value of $K^*(x_1)$ will change as $t_1$ changes. For general nonlinear systems (\ref{2nonlinear}), Theorem \ref{exists in GL(n)} gives a way for $K^*(t)$ to translate and rotate over $t$, and makes $K^*(t_1)x_1=y_1$, which means that $y_1$ can indeed be obtained.

However, for (\ref{Hartman y}) and (\ref{Hartman x}) in the Hartman-Grobman theorem, which can be regard as a special case of (\ref{2nonlinear}), there is some difference. The $K^*$ should be independent of $t\in[0,t_1]$, and it should only depend on the terminal $t_1$. 

In this section, 
we will prove that the form of $K^*$ is actually expressed as: \begin{equation}\label{Hartman K(t,x)}
K^*(\cdot)=U^T(t_1,x_0)G^{-1}(t_1,x_0)(e^{-At_1}y_1-y_0),
\end{equation}
where
$$U(s,x_0)=e^{As}\triangledown r(\mathbf{0})x_0,$$
$$G(t,x_0)=\int_0^tU(s,x_0)U^T(s,x_0)ds.$$\\

Note that $y(t)=e^{At}y_0+\int_0^te^{A(t-s)}r(y(s))ds$ is the solution of (\ref{Hartman y}), $x(t)=e^{At}x_0$ is the solution of (\ref{Hartman x}). Nearby the hyperbolic equilibrium ($x^*=y^*=\mathbf{0}$), $r(y)=r(\mathbf{0})+\triangledown r(\mathbf{0})y+o(|y|)$\footnote{The symbol $\triangledown$ represents the gradient, and $\triangledown r(y)=\Big(\frac{\partial r(y)}{\partial y_1},\frac{\partial r(y)}{\partial y_2},\cdots,\frac{\partial r(y)}{\partial y_n}\Big)$ is a matrix, where $r(y)=(r(y)_1,r(y)_2,\cdots,r(y)_n)$.}. Ignore higher order terms and $r(\mathbf{0})=\mathbf{0}$, then $r(y(s))=\triangledown r(\mathbf{0})y(s)=\triangledown r(\mathbf{0})K^*(s,x_0)x(s)$ and
\begin{equation}\label{Hartman y(t)}
y(t)=e^{At}y_0+\int_0^te^{At}\triangledown r(\mathbf{0})K^*(s,x_0)x_0ds.
\end{equation}

We observed that when $x_0=(0,\cdots,0,1)$, $\det{G(t,x_0)}=0$, $G(t,x_0)$ is singular, and when $\triangledown r(\mathbf{0}) x_0=(1,\cdots,1)$, $\det{G(t,x_0)}\neq0$, $G(t,x_0)$ is non-singular (invertible). However,
in the Step 2 of Theorem \ref{exists in GL(n)}, we find the invertible $K^*$ through appropriate invertible translation and rotation. Hence, in order to obtain the invertible $K^*$, we suppose that $G(t,x_0)$ is invertible here.

According to the proof in Proposition \ref{K*(t,x)}, there exists a homeomorphism $H$ s.t. $H(x_0)=y_0$ and $H(x(t_1))\triangleq H(x_1)=y_1\triangleq y(t_1),~\forall~t_1\in[0,T]$. On the basis of the theory of controllability \cite{[WU+17]}, if $G(t,x_0)$ is invertible, the exhibit form of $K^*$ is
\begin{equation}\label{K(t,x)}
K^*(t,x_0)=U^T(t,x_0)G^{-1}(t_1,x_0)(e^{-At_1}y_1-y_0).
\end{equation}

Next, we prove the existence of $K^*$ in (\ref{K(t,x)}) by the Schauder fixed point theorem \cite{[DK+70]}. Substituting $K^*(t,x_0)$ into the right-hand side of (\ref{Hartman y(t)}) yields the nonlinear operator
\begin{equation}\label{Def P}\begin{aligned}
y(t)&=e^{At}y_0+\int_0^te^{At}\triangledown r(0)U^T(s,x_0)G^{-1}(t_1,x_0)(e^{-At_1}y_1-y_0)x_0ds\\
&=e^{At}y_0+e^{At}G(t,x_0)G^{-1}(t_1,x_0)(e^{-At_1}y_1-y_0)\triangleq Py(t).
\end{aligned}
\end{equation}
Clearly $Py(0)=y_0$ and $Py(t_1) = y_1$. Thus if a fixed point of the operator $P$ is
obtained the $K^*(t,x_0)$ (\ref{K(t,x)}) steers the system (\ref{Hartman x}) from $x_0$ to $x_1$, in time $t_1$. Then we have the following theorem.
\begin{theorem}\label{terminal K}
Suppose that $x(t)$, $y(t)$ satisfy equation (\ref{Hartman x}) and equation (\ref{Hartman y}), and $A\in \R^{n\times n}$ is a constant matrix s.t. $|e^{At}|\leq Me^{-\eta|t|},t\in\R$ with some constants $M\geq1$ and $\eta>0$.
Let $r(y)=o(|y|)\in C^1(\R^n)$ such that $|\triangledown r(y)|\leq c_1$, where $c_1\geq0$.
Further, suppose that $G(t,x_0)$ is an invertible matrix and there exists a constant $c_2>0$ s.t. $\underset{x\in C[0,t_1;\R^n]}{\inf}|\det G(t,x)|\geq \frac{1}{c_2}$.
Then for every pair $x_0,x_1\in\R^n$, the operator $P$ defined by (\ref{Def P}) has a fixed point in $C[0,t_1;\R^n]$.
\end{theorem}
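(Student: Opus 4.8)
The plan is to verify the hypotheses of the Schauder fixed point theorem for $P$ on the Banach space $\bigl(C[0,t_1;\R^n],\|\cdot\|_\infty\bigr)$, namely that $P$ maps some nonempty, closed, bounded, convex set into itself, that $P$ is continuous there, and that the image is relatively compact; the last point will follow from the Arzel\`{a}--Ascoli lemma.

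\textbf{Step 1 (the self-map).} First I would bound $G$ and $G^{-1}$. Since $|e^{At}|\le Me^{-\eta|t|}$ (in particular $|e^{At}|\le M$ and $|e^{-At_1}|\le M$) and $|\nabla r(y)|\le c_1$, on $[0,t_1]$ one has $|U(s,x_0)|=|e^{As}\nabla r(\mathbf{0})x_0|\le Mc_1|x_0|$, so every entry of $G(t,x_0)=\int_0^tU(s,x_0)U^T(s,x_0)\,ds$ is bounded by a constant $\gamma=\gamma(M,c_1,|x_0|,t_1)$. Combining the adjugate formula $G^{-1}=(\det G)^{-1}\operatorname{adj}(G)$ with the nondegeneracy hypothesis $\inf|\det G(t_1,x)|\ge 1/c_2$ yields a uniform bound $\|G^{-1}(t_1,x_0)\|\le\beta:=c_2C_n\gamma^{\,n-1}$, with $C_n$ a dimensional constant. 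Plugging these into the definition (\ref{Def P}) of $P$ gives $\|Py\|_\infty\le M|y_0|+M\gamma\beta\bigl(M|y_1|+|y_0|\bigr)=:R$, a bound independent of $y$, so $P$ maps the closed, bounded, convex ball $\mathcal{C}:=\{y\in C[0,t_1;\R^n]:\|y\|_\infty\le R\}$ into itself. (If one keeps the remainder $r(y)-\nabla r(\mathbf{0})y=o(|y|)$ in the operator rather than discarding it, the same estimate still closes a ball into itself once a smallness assumption on $r$ in the spirit of (\ref{r Lipschitz}) is imposed, so the correction term is absorbed.)

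\textbf{Step 2 (continuity).} Writing $Py(t)=e^{At}y_0+\int_0^te^{A(t-s)}\bar r(s,y)\,ds$ with $\bar r$ the integrand assembled from $r$ and the explicit control $K^*$ of (\ref{K(t,x)}), a uniformly convergent sequence $y_k\to y$ in $\mathcal{C}$ forces $\bar r(\cdot,y_k)\to\bar r(\cdot,y)$ uniformly, since $r$ and $\nabla r$ are continuous on the compact ball of radius $R$; dominated convergence then gives $Py_k\to Py$ in $C[0,t_1;\R^n]$.

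\textbf{Step 3 (compactness).} $P(\mathcal{C})$ is uniformly bounded by Step 1. For equicontinuity, for $0\le t\le t'\le t_1$ I would split
$$|Py(t')-Py(t)|\le|e^{At'}-e^{At}|\,|y_0|+\Bigl|\int_t^{t'}e^{A(t'-s)}\bar r(s,y)\,ds\Bigr|+\Bigl|\int_0^t\bigl(e^{A(t'-s)}-e^{A(t-s)}\bigr)\bar r(s,y)\,ds\Bigr|$$
and bound each term by $C|t'-t|$ with $C$ independent of $y\in\mathcal{C}$, using that $\tau\mapsto e^{A\tau}$ is Lipschitz on compact intervals and $\sup_{s,\,y\in\mathcal{C}}|\bar r(s,y)|<\infty$. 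Then Arzel\`{a}--Ascoli gives relative compactness of $P(\mathcal{C})$, and Schauder's theorem produces a fixed point $y^*\in\mathcal{C}$; since $Py(0)=y_0$ and $Py(t_1)=y_1$ identically, the control $K^*$ of (\ref{K(t,x)}) steers (\ref{Hartman x}) from $x_0$ to $x_1$ in time $t_1$.

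\textbf{Main obstacle.} The delicate point is Step 1: turning the quantitative nondegeneracy $\inf|\det G|\ge 1/c_2$, together with boundedness of the entries of $G$, into a genuinely uniform bound on $\|G^{-1}(t_1,x_0)\|$, and then checking that the resulting radius $R$, once routed back through $|e^{At}|\le Me^{-\eta|t|}$, really defines a self-map of a fixed ball rather than merely a map into a larger one. If the higher-order part of $r$ is retained in $P$, this is precisely where a smallness hypothesis on $r$ becomes indispensable; the continuity in Step 2 and the Arzel\`{a}--Ascoli estimate in Step 3 are then routine.
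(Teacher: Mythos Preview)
Your proposal is correct and follows essentially the same route as the paper: bound $Py$ uniformly to obtain a self-map of a closed ball in $C[0,t_1;\R^n]$, check continuity of $P$, establish equicontinuity of the image, and conclude via Arzel\`{a}--Ascoli and Schauder. The paper's own proof is organized identically (uniform bound $|Py(t)|\le C_0$, the ball $\chi=\{\|y\|\le C_0\}$, an equicontinuity estimate $|z(t'')-z(t')|\le C_1|t''-t'|$, then Arzel\`{a}--Ascoli and Schauder), only with the exponential factor $e^{-\eta t}$ tracked more explicitly in the constants and with continuity of $P$ merely asserted rather than argued; your use of the adjugate formula to convert the determinant lower bound into a norm bound on $G^{-1}$ makes that step cleaner than in the paper.
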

\begin{proof}
Define the norm in $C[0,T;\R^n]$ be $||y(t)||=\underset{t\in[0,T]}{\max}|y(t)|$. According to (\ref{Def P}),
\begin{equation}\label{Py(t)}\begin{aligned}
|Py(t)|&\leq Me^{-\eta t}|y_0|+Me^{-\eta t}\int_0^t\mid Me^{-\eta s}c_1|x_0|\mid^2ds
\Big(c_2(Me^{-\eta t_1}|y_1|+|y_0|)\Big)\\
&= Me^{-\eta t}|y_0|+Me^{-\eta t}(Mc_1|x_0|)^2\frac{1}{2\eta}(1-e^{-2\eta t})
\Big(c_2(Me^{-\eta t_1}|y_1|+|y_0|)\Big)\\
&\leq Me^{-\eta t}|y_0|+\frac{M^3 c_1^2 c_2|x_0|^2}{2\eta}e^{-\eta t}
\Big(Me^{-\eta t_1}|y_1|+|y_0|\Big).
\end{aligned}\end{equation}
Due to $e^{-\eta t}\leq1$,
\begin{equation}\label{C}
|Py(t)|\leq M|y_0|+\frac{M^3 c_1^2 c_2|x_0|^2}{2\eta}\Big(Me^{-\eta t_1}|y_1|+|y_0|\Big)\triangleq C_0.
\end{equation}

Consider the closed and convex subset $\chi\triangleq\{y(t)|y(t)\in C[0,t_1;\R^n],||y(t)||=\underset{t\in[0,t_1]}{\max}|y(t)|\leq C_0\}$ of $C[0,t_1;\R^n]$. Let $\Xi\triangleq\{z(t)|z(t)=P(y(t)),y(t)\in \chi\}$ be the image of $\chi$.
$\forall~\epsilon>0,\forall~t',t''\in[0,t_1]$ s.t. $|t''-t'|<\delta$,
\begin{equation}\label{euqual-con}\begin{aligned}
|z(t'')-z(t')|\leq |e^{At''}&-e^{At'}|~|y_0|+|e^{At''}G(t'',x_0)-e^{At'}G(t',x_0)|~|G^{-1}(t_1,x_0)(e^{-At_1}y_1-y_0)|\\
\leq |Ae^{At_\xi}&|~|t''-t'|~|y_0|\\
+&\bigg|\int_{t'}^{t''}[e^{At}U(t)U^T(t)+Ae^{At}\int_0^tU(s)U^T(s)ds]dt\bigg|
|G^{-1}(t_1,x_0)(e^{-At_1}y_1-y_0)|\\
\leq |AM&e^{-\eta t_\xi}|~|t''-t'|~|y_0|\\
+&\bigg|\int_{t'}^{t''}[M^3c_1^2|x_0|^2e^{-3\eta t}+A\frac{M^3 c_1^2|x_0|^2}{2\eta}e^{-\eta t}]dt\bigg|
\bigg(c_2(Me^{-\eta t_1}|y_1|+|y_0|)\bigg)\\
\leq M|A&y_0|~|t''-t'|\\
+&\bigg|M^3c_1^2|x_0|^2+A\frac{M^3 c_1^2|x_0|^2}{2\eta}\bigg|
\bigg(c_2(Me^{-\eta t_1}|y_1|+|y_0|)\bigg)|t''-t'|\\
\triangleq C_1|t''&-t'|\\
< C_1\delta&=\epsilon,\\
\end{aligned}
\end{equation}
where $\delta=\frac{\epsilon}{C_1},t_\xi\in[t',t'']$ (without losing generality, $t'<t''$). This shows that $z(t)$ is equicontinuous.\\

The operator $P$ as defined by (\ref{Def P}) is continuous and it is easily established from the Arzel\`{a}-Ascoli lemma that the image set $\Xi$ is compact and is a subset of $\chi$. In fact, it is only necessary to prove that there is a subsequence $\{z_{n_j}(t)\}_{j=1}^\infty$ of $\{z_n(t)\}_{n=1}^\infty\in\Xi$, which is uniformly convergent on $[0,t_1]$.

Take the rational sequence $\{r_i\}_{i=1}^\infty,r_i\in[0, t_1]$, and consider the infinite square matrix $z_n(r_i),n,i=1,2,\cdots$. They are uniformly bounded, i.e. $||z_n(r_i)||\leq C_0$ (\ref{C}). By using the diagonal method, there is a subsequence that converge at all $r_i$, which is recorded as $\{z_{n_j}(t)\}_{j=1}^\infty$.

On the one hand, the equicontinuity (\ref{euqual-con}) shows that $\forall~\epsilon>0$, $\exists~\delta>0$ s.t. when $|t''-t'|<\delta$,
$$
|z_{n_j}(t'')-z_{n_j}(t')|<\frac{\epsilon}{3},j=1,2,\cdots.
$$
On the other hand, we can find finite $r_i,i=1,2,\cdots,I$ s.t. $[0,t_1]\subset\underset{i=1}{\overset{I}{\bigcup}}(r_i-\delta,r_i+\delta)$. For each $r_i,i=1,2,\cdots,I$, $\exists~N=N(\epsilon)$ s.t. when $j,k\geq N$,
$$
|z_{n_j}(r_i)-z_{n_k}(r_i)|<\frac{\epsilon}{3}.
$$

Combined with the above analysis, $\forall~t\in[0,t_1],\exists~r_i(i=1,2,\cdots,I)$ s.t. $|t-r_i|<\delta$, when $j,k\geq N$,
$$\begin{aligned}
|z_{n_j}(t)-z_{n_k}(t)|&\leq|z_{n_j}(t)-z_{n_j}(r_i)|+|z_{n_j}(r_i)-z_{n_k}(r_i)|+|z_{n_k}(r_i)-z_{n_k}(t)|\\
&<\frac{\epsilon}{3}+\frac{\epsilon}{3}+\frac{\epsilon}{3}=\epsilon.
\end{aligned}
$$
This shows that $\{z_{n_j}(t)\}_{j=1}^\infty$ is uniformly convergent on $[0,t_1]$.
By the Arzel\`{a}-Ascoli lemma,
the image set $\Xi$ is compact and is a subset of the closed and convex set $\chi$.
Hence by the Schauder fixed point theorem, the operator $P$ has a fixed point in $C[0,t_1;\R^n]$.
\end{proof}

In order to obtain (\ref{Hartman K(t,x)}) and the properties of the hyperbolic fixed point $\mathbf{0}$, we replace $t$ with $t_1$ in (\ref{K(t,x)}) and (\ref{Py(t)}):
\begin{equation*}
K^*(\cdot)=U^T(t_1,x_0)G^{-1}(t_1,x_0)(e^{-At_1}y_1-y_0),
\end{equation*}
$$
|y_1|\leq Me^{-\eta t_1}|y_0|+\frac{M^3 c_1^2 c_2|x_0|^2}{2\eta}e^{-\eta t_1}
\Big(Me^{-\eta t_1}|y_1|+|y_0|\Big).
$$
Then
$$
\Big(1-\frac{M^4 c_1^2 c_2|x_0|^2}{2\eta}e^{-2\eta t_1}\Big)|y_1|\leq \Big(Me^{-\eta t_1}+\frac{M^3 c_1^2 c_2|x_0|^2}{2\eta}e^{-\eta t_1}\Big)|y_0|.
$$
We can choose $M,\eta$ s.t. $\frac{M^4 c_1^2 c_2|x_0|^2}{2\eta}e^{-2\eta t_1}<1$ and $|y_1|\leq q(t_1)|y_0|,$ where
\begin{equation}\label{q(t)}
q(t_1)=\frac{Me^{-\eta t_1}+\frac{M^3 c_1^2 c_2|x_0|^2}{2\eta}e^{-\eta t_1}}{1-\frac{M^4 c_1^2 c_2|x_0|^2}{2\eta}e^{-2\eta t_1}},0<q(t_1)<1.
\end{equation}
It can be verified that $q(t_1)\overset{t_1\rightarrow\infty}{\longrightarrow}0$, and this means that $y(t_1)\overset{t_1\rightarrow\infty}{\longrightarrow}\mathbf{0}=y^*=x^*$ (the hyperbolic equilibrium).

In general, the choice of a fixed-point theorem (see \cite{[CQ+88]}) will depend on the conditions imposed on $f(\cdot)$ and $g(\cdot)$ in (\ref{2nonlinear}).\\

So far, we have proved the existence of functional extremum. Back to our original problem, we hope to achieve the similarity between the two systems by the minimizer. Now the minimizer already exists, we focus on exploring the maximum principle, the necessary conditions for the minimizer.

\section{Necessary conditions-the maximum principle}\label{MAX}
\ \ \ \ In the usual manner, to determine the minimizer $K^*$, we need to derive a maximum principle as the necessary condition like Pontryagin maximum principle. The main idea is that the local variation of the functional $J[K(\cdot)]$ must be 0 at $K^*$ (i.e. $\delta J[K^*(\cdot)]=0$), if $J[K^*(\cdot)]$ is the extreme value, which is mentioned in Section \ref{The main idea} \textbf{(iv)}.

In this section, we assume that the functions $f(\cdot)$ and $g(\cdot)$ in (\ref{2nonlinear}) are twice continuous differentiable with respect to their state variables.
For a fixed initial value $x_0$, let $\Gamma_{x_0}$ be the trajectory of $x$.
We consider $K(t)\in\Omega$, $\Omega$ 
is a convex set, hence we use the Lagrange multiplier method to obtain the maximum principle.


\subsection{Maximum principle of two systems}\label{MAX1}
\ \ \ \ Introduce Lagrange multiplier vector valued functions $\lambda (t)$, $\mu
(t)$, and solve the functional minimum problem under the constraint of (\ref{2nonlinear}). Define a new functional $J_2$,
\begin{equation}
	J_2[K(\cdot)]=\dfrac{1}{T} \int_{0}^{T}\{||K(x(t,x_0)-y(t,y_0)||^2+\lambda^T
(t)[\dot{x}-f(t,x)]+\mu^T (t)[\dot{y}-g(t,y)] \}dt ,
\end{equation}
where $K(\cdot)\in \Omega$, $t\in [0,T]$, $y_0$ is given such that $y_0=K(x_0)$.
The minimizer is $$K^*(\cdot)=\arg\min~\dfrac{1}{T} \int_{0}^{T} ||K(x(t,x_0))-y(t,y_0)||^2 dt.$$
Write $H(t,x(t),y(t),K(\cdot),\lambda (t),\mu (t))$

$=-\dfrac{1}{T}||K(x(t,x_0) )-y(t,y_0)||^2+\lambda^T (t)f(t,x)+\mu^T
(t)g(t,y)  . $\\

\begin{theorem}\label{max1}
\textbf{ (Maximum principle)}
If $K^*(\cdot)\in \Omega$ such that $J_2 [K^*(\cdot)]$ is the minimum value, then there are
vector valued functions $\lambda (t),~\mu (t)\in \mathbb{R}^n $ satisfying
\begin{equation}\label{a1b1c1d1}
	\begin{aligned}
		(a)&\left\{ \begin{aligned}
			\overset{.}x(t) &=H_\lambda^T,  \\
			\overset{.}y(t) &=H_\mu^T, \\ \end{aligned}  \right. \\
		(b)&\left\{  \begin{aligned}
			\overset{.}\lambda^{T}(t) &=-\frac{\partial
				H(t,x,y,K^*(\cdot),\lambda(t),\mu(t))}{\partial x},\\
			\lambda^T(T)&=0,\\  \end{aligned}  \right.\\
		(c)&\left\{  \begin{aligned}
			\overset{.}\mu^{T}(t) &=-\frac{\partial
				H(t,x,y,K^*(\cdot),\lambda(t),\mu(t))}{\partial y},\\
			\mu^T(T)&=0,\\  \end{aligned}  \right.\\
		(d)&H(t,x,y,K^*(\cdot),\lambda(t),\mu(t))=\displaystyle\max_{K(\cdot)\in \Omega}
		H(t,x,y,K(\cdot),\lambda(t),\mu(t)).
	\end{aligned}
\end{equation}
\end{theorem}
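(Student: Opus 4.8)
The plan is to run a Lagrange-multiplier / Pontryagin-type argument on the augmented functional $J_2$, treating the convex admissible set $\Omega$ as the analogue of a control set, with twice-differentiability of $f,g$ ensuring the variational computations are legitimate. First I would integrate by parts the terms $\lambda^T\dot x$ and $\mu^T\dot y$ inside $J_2$, so that every time derivative is transferred onto the multipliers and the Hamiltonian $H$ appears explicitly:
\begin{equation*}
J_2[K(\cdot)] = \frac1T\big[\lambda^T(T)x(T)-\lambda^T(0)x_0+\mu^T(T)y(T)-\mu^T(0)y_0\big] - \frac1T\int_0^T\big\{H(t,x,y,K(\cdot),\lambda,\mu)+\dot\lambda^T(t)x(t)+\dot\mu^T(t)y(t)\big\}\,dt,
\end{equation*}
where I keep the $1/T$ bookkeeping consistent; the exact constants are routine and do not affect the structure. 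The recovery of (a) is then immediate, since $\partial H/\partial\lambda=f$ and $\partial H/\partial\mu=g$ reproduce (\ref{2nonlinear}).

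Next I would compute the first variation of $J_2$ at the minimizer $K^*$ (with associated trajectories $x^*,y^*$) separately in the three directions $\delta x$, $\delta y$, $\delta K$, using the reduction of a functional variation to an ordinary derivative in $\epsilon$ recalled in Section \ref{The main idea}(iv). Since $x_0$ is fixed, admissible $\delta x$ obey $\delta x(0)=0$ with $\delta x(T)$ free; collecting the coefficient of $\delta x$ and applying the fundamental variational lemma (Lemma \ref{L2}) forces $\dot\lambda^T=-\partial H/\partial x$ on $[0,T]$, while the boundary term at $t=T$ gives $\lambda^T(T)=0$, which is (b). The identical computation in the $y$-direction yields $\dot\mu^T=-\partial H/\partial y$ and, from the free terminal value, $\mu^T(T)=0$, which is (c).

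For the maximum condition (d) I would use convexity of $\Omega$: for any $\tilde K\in\Omega$ set $K^\epsilon=K^*+\epsilon(\tilde K-K^*)\in\Omega$ for $\epsilon\in[0,1]$; optimality of $K^*$ gives $\tfrac{d}{d\epsilon}J_2[K^\epsilon]\big|_{\epsilon=0^+}\ge 0$, which after substituting the adjoint equations and the endpoint relations collapses to the variational inequality $\int_0^T\langle\,\partial_K H(t,x^*,y^*,K^*,\lambda,\mu),\,\tilde K-K^*\,\rangle\,dt\le 0$ for all $\tilde K\in\Omega$. Since the only $K$-dependence of $H$ is through the term $-\tfrac1T\|K(x)-y\|^2$, which is concave in $K$, this inequality upgrades to the global, pointwise-in-$t$ statement $H(t,x,y,K^*(\cdot),\lambda,\mu)=\max_{K(\cdot)\in\Omega}H(t,x,y,K(\cdot),\lambda,\mu)$, i.e. (d); equivalently one can obtain (d) by a needle variation localized along $\Gamma_{x_0}$.

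The main obstacle is the unusual status of the ``control'': $K$ is a map on the state space rather than a time-parametrized function, and it enters the problem twice — directly through $\|K(x(t,x_0))-y\|^2$ and indirectly through the initial condition $y_0=K(x_0)$, which makes the $t=0$ endpoint of the $y$-equation not free. I expect the delicate point to be showing that the boundary contribution $-\tfrac1T\mu^T(0)\,\delta K(x_0)$ generated by varying $y_0=K(x_0)$ is exactly reabsorbed by the $\delta K$-variation, so that no extra transversality condition at $t=0$ survives and (d) holds in the stated form; this requires reparametrizing $K$ along $\Gamma_{x_0}$ (using that each point of the trajectory is visited at a unique time) and verifying the cancellation. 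The secondary point to watch is the passage from stationarity to the \emph{maximum} (rather than merely critical) condition, handled above via concavity of $H$ in $K$ together with convexity of $\Omega$.
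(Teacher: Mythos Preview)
Your proposal is correct and follows essentially the same route as the paper: integrate $\lambda^T\dot x+\mu^T\dot y$ by parts so that $H$ appears, read off (a) from $H_\lambda=f$, $H_\mu=g$, choose $\lambda,\mu$ to satisfy the adjoint systems (b),(c) so that the $\delta x,\delta y$ contributions to the first variation vanish (the terminal conditions $\lambda^T(T)=\mu^T(T)=0$ coming from the free endpoint boundary terms), and then extract the maximum condition (d) from the remaining $\delta K$-variation.

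Two small differences are worth noting. First, for (d) the paper does not argue via a variational inequality plus concavity; instead it carries the expansion of $J_2[K^*+\epsilon\,\delta K]$ to second order in $\epsilon$, obtains $\partial H/\partial K=0$ from $\delta J_2=0$ together with Lemma~\ref{L2}, and then reads $\partial^2 H/\partial K^2\le 0$ off the sign of $\delta^2 J_2$, concluding (d) from these two pointwise derivative conditions. Your concavity argument reaches the same conclusion more directly and avoids the second-order bookkeeping; the paper's explicit expansion, on the other hand, makes no appeal to convexity of $\Omega$ beyond what is needed to form $K^\epsilon$. Second, the ``main obstacle'' you flag --- the coupling $y_0=K(x_0)$ --- is in fact not present in the paper's setup: $y_0$ is \emph{given} and the optimization is over $\{K\in\Omega:K(x_0)=y_0\}$ (cf.\ the infimum in Definition~\ref{similarity}), so admissible variations satisfy $\delta K(x_0)=0$, whence $\delta y(0)=0$, and no extra transversality condition at $t=0$ arises. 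With that reading your proof goes through without the cancellation you anticipated having to check.
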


\begin{proof}
Integrating by parts, the above formula can be transformed into
\begin{equation}
\begin{aligned}
	J_2 [K^*(\cdot)]=&\lambda^T (T)x(T)-\lambda^T (0) x_0+\mu^T (T)y(T)-\mu^T (0) y_0\\
	&-\int_{0}^{T}\{ \dot{\lambda}^T(t)x(t)+\dot{\mu}^T(t)y(t)\\
	&-\dfrac{1}{T}||K^*(x(t,x_0))-y(t,K^*(x_0))||^2+\lambda^T (t)f(t,x)+\mu^T
	(t)g(t,y) \}dt\\
	=&\lambda^T (T)x(T)-\lambda^T (0) x_0+\mu^T (T)y(T)-\mu^T (0) y_0\\
	&-\int_{0}^{T}\{
	\dot{\lambda}^T(t)x(t)+\dot{\mu}^T(t)y(t)+H(t,x,y,K^*(\cdot),\lambda(t),\mu(t))\}dt,
\end{aligned}
\end{equation}
where $H(t,x(t),y(t),K^*(\cdot),\lambda (t),\mu (t))$

$\quad\quad=-\dfrac{1}{T}||K^*(x(t,x_0))-y(t,K^*(x_0))||^2+\lambda^T (t)f(t,x)+\mu^T
(t)g(t,y)  . $\\

Obviously,
\begin{equation*}
    (a)\left\{ \begin{aligned}
		\overset{.}x(t) &=H_\lambda^T,  \\
		\overset{.}y(t) &=H_\mu^T. \\ \end{aligned}  \right. \\
\end{equation*}
Let $\epsilon$ be a sufficiently small real number, and $\delta K(\cdot)$ be
an arbitrarily determined piecewise continuous vector valued function on $[0,T]$.
Let $K^{\epsilon} =K^*+\epsilon \delta K$, according to the continuous dependence of the solution on the parameter, the solution of $K^{\epsilon} $ can correspondingly be expressed as
\begin{equation}\begin{aligned}
	x^{\epsilon}(t)&=x(t)+\epsilon\delta
	x(t)+\frac{\epsilon^2}{2}\delta^2x(t)+o(\epsilon^2),\\
		y^{\epsilon}(t)&=y(t)+\epsilon\delta
		y(t)+\frac{\epsilon^2}{2}\delta^2y(t)+o(\epsilon^2).
\end{aligned}
\end{equation}
Besides, $\dot{x}=f(t,x),~\dot{y}=g(t,y)$. Then
\begin{equation}
\begin{aligned}
	&\dot{x}^\epsilon=f(t,x^\epsilon),~\dot{y}^\epsilon=g(t,y^\epsilon),\\
	&\dot{x}^\epsilon-\dot{x}=\frac{dx^\epsilon(t)}{dt}-\frac{dx(t)}{dt}=
\frac{d}{dt}\delta x(t)\epsilon+o(\epsilon),\\
&\dot{y}^\epsilon-\dot{y}=\frac{dy^\epsilon(t)}{dt}-\frac{dy(t)}{dt}=
\frac{d}{dt}\delta y(t)\epsilon+o(\epsilon),\\
	&f(t,x^\epsilon)-f(t,x)=\frac{\partial f}{\partial x}(t,x)\delta
	x(t)\epsilon+o(\epsilon),\\
&g(t,y^\epsilon)-g(t,y)=\frac{\partial g}{\partial y}(t,y)\delta
	y(t)\epsilon+o(\epsilon).
\end{aligned}
\end{equation}
Comparing the coefficient of the first power of $\epsilon$, it can be obtained
that
\begin{equation}
\begin{aligned}
	&\left\{ \begin{aligned}
		\frac{d}{dt}\delta x(t)&=\frac{\partial f}{\partial x}(t,x)\delta x(t),
		\\
		\delta x(0)&=0, \\ \end{aligned}  \right.\\
	&\left\{  \begin{aligned}
		\frac{d}{dt}\delta y(t)&=\frac{\partial g}{\partial y}(t,y)\delta y(t),
		\\
		\delta y(0)&=0,\\  \end{aligned}  \right.\\
	&\delta^2x(t)=\displaystyle\lim_{\epsilon \rightarrow
	0}\frac{2}{\epsilon^2}[x^\epsilon (t)-x(t)+\epsilon \delta x(t)],\\
	&\delta^2y(t)=\displaystyle\lim_{\epsilon \rightarrow
	0}\frac{2}{\epsilon^2}[y^\epsilon (t)-y(t)+\epsilon \delta y(t)].
\end{aligned}
\end{equation}
Substitute $K^\epsilon,~x^\epsilon (t),~y^\epsilon (t)$ in $J_2 [\cdot]$:
\begin{equation}
\begin{aligned}
	J_2 [K^\epsilon(\cdot)]=&\lambda^T (T)[x(T)+\epsilon \delta
	x(T)+\frac{\epsilon^2}{2}\delta^2x(T)+o(\epsilon^2)]-\lambda^T (0) x_0\\
	&+\mu^T (T)[y(T)+\epsilon \delta
	y(T)+\frac{\epsilon^2}{2}\delta^2y(T)+o(\epsilon^2)]-\mu^T (0) y_0\\
	&-\int_{0}^{T}\dot{\lambda}^T(t)[x(t)+\epsilon \delta
	x(t)+\frac{\epsilon^2}{2}\delta^2x(t)+o(\epsilon^2)]dt\\
	&-\int_{0}^{T}\dot{\mu}^T(t)[y(t)+\epsilon \delta
	y(t)+\frac{\epsilon^2}{2}\delta^2y(t)+o(\epsilon^2)]dt\\
	&-\int_{0}^{T}H(t,x^\epsilon,y^\epsilon,K^\epsilon(\cdot),\lambda(t),\mu(t))dt,
\end{aligned}
\end{equation}
\begin{equation*}
\begin{aligned}
	\Delta J_2[K^*(\cdot)]=&J_2[K^\epsilon(\cdot)]-J_2[K^*(\cdot)]\\
	=&\lambda^T (T)[\epsilon \delta
	x(T)+\frac{\epsilon^2}{2}\delta^2x(T)+o(\epsilon^2)]+\mu^T (T)[\epsilon
	\delta y(T)+\frac{\epsilon^2}{2}\delta^2y(T)+o(\epsilon^2)]\\
	&-\int_{0}^{T}\dot{\lambda}^T(t)[\epsilon \delta
	x(t)+\frac{\epsilon^2}{2}\delta^2x(t)+o(\epsilon^2)]dt\\
    &-\int_{0}^{T}\dot{\mu}^T(t)[\epsilon \delta
	y(t)+\frac{\epsilon^2}{2}\delta^2y(t)+o(\epsilon^2)]dt\\
	&-\int_{0}^{T}\Delta H(t,x,y,K^*(\cdot),\lambda(t),\mu(t))dt,\\
    \text{where} \quad\Delta H=&H(t,x^\epsilon,y^\epsilon,K^\epsilon(\cdot),\lambda(t),
	\mu(t))-H(t,x,y,K^*(\cdot),\lambda(t),\mu(t))\\
	=&\epsilon\bigg[\frac{\partial H}{\partial x}\delta x(t)+\frac{\partial
	H}{\partial y}\delta y(t)+\frac{\partial H}{\partial K}\delta K(t)\bigg]\\
	&+\frac{\epsilon^2}{2}\Bigg\{\frac{\partial H}{\partial x}\delta^2
	x+\frac{\partial H}{\partial y}\delta^2 y+\frac{\partial H}{\partial
	K}\delta^2 K\\
&~~~~~~~~~~+2[\delta^Tx,\delta^Ty,\delta^TK]
\left[
\begin{matrix}
	\frac{\partial^2H}{\partial x^2}&\frac{\partial^2H}{\partial x\partial
	y}&\frac{\partial^2H}{\partial x\partial K}\\
	\frac{\partial^2H}{\partial y\partial x}&\frac{\partial^2H}{\partial
	y^2}&\frac{\partial^2H}{\partial y\partial K}\\
	\frac{\partial^2H}{\partial K\partial x}&\frac{\partial^2H}{\partial
	K\partial y}&\frac{\partial^2H}{\partial K^2}
\end{matrix}
\right]
\left[
\begin{matrix}
	\delta^Tx\\
	\delta^Ty\\
	\delta^TK
\end{matrix}
\right]
	\Bigg\}.
\end{aligned}
\end{equation*}

Moreover, $\Delta J_2 [K^*(\cdot)]=\epsilon \delta J_2 [K^*(\cdot)]+\frac{\epsilon^2}{2} \delta^2 J_2
	[K^*(\cdot)]+o(\epsilon^2 )$. Comparing the coefficient of the first power of $\epsilon$, we have
\begin{equation}
\begin{aligned}
	\delta J_2[K^*(\cdot)]=&\lambda^T(T)\delta x(T)+\mu^T(T)\delta y(T)\\
	&-\int_{0}^{T}[\dot{\lambda}^{T}(t)+\frac{\partial H}{\partial x}]\delta
	x(t)dt-\int_{0}^{T}[\dot{\mu}^{T}(t)+\frac{\partial H}{\partial y}]\delta y(t)dt-\int_{0}^{T}\frac{\partial H}{\partial K}\delta K(\cdot)dt.
\end{aligned}
\end{equation}
Let
\begin{equation*}\begin{aligned}
&(b)\left\{  \begin{aligned}
		\overset{.}\lambda^{T}(t) &=-\frac{\partial
			H(t,x,y,K^*(\cdot),\lambda(t),\mu(t))}{\partial x},\\
		\lambda^T(T)&=0,\\  \end{aligned}  \right.~\\
&(c)\left\{  \begin{aligned}
		\overset{.}\mu^{T}(t) &=-\frac{\partial
			H(t,x,y,K^*(\cdot),\lambda(t),\mu(t))}{\partial y},\\
		\mu^T(T)&=0.\\  \end{aligned}  \right.
\end{aligned}
\end{equation*}
It follows from $\delta J_2[K^*(\cdot)]=0$ that $\int_{0}^{T}\frac{\partial H}{\partial K}\delta K(\cdot)dt=0$. According to variational fundamental theorem (Lemma \ref{L2}),
\begin{equation}
	\frac{\partial
		H(t,x,y,K^*(\cdot),\lambda(t),\mu(t))}{\partial K}=0.
\end{equation}

Comparing the coefficient of the second power of $\epsilon$, it yields
\begin{equation}
\begin{aligned}
	\delta^2 J_2[K^*(\cdot)]=&\lambda^T(T)\delta^2 x(T)+\mu^T(T)\delta^2 y(T)\\
	&-\int_{0}^{T}[\dot{\lambda}^{T}(t)+\frac{\partial H}{\partial x}]\delta^2
	x(t)dt-\int_{0}^{T}[\dot{\mu}^{T}(t)+\frac{\partial H}{\partial y}]\delta^2 y(t)dt-\int_{0}^{T}\frac{\partial H}{\partial K}\delta^2 K(\cdot)dt\\
	&-2\int_{0}^{T}[\delta^Tx,\delta^Ty,\delta^TK]
	\left[
	\begin{matrix}
		\frac{\partial^2H}{\partial x^2}&\frac{\partial^2H}{\partial x\partial
			y}&\frac{\partial^2H}{\partial x\partial K}\\
		\frac{\partial^2H}{\partial y\partial x}&\frac{\partial^2H}{\partial
			y^2}&\frac{\partial^2H}{\partial y\partial K}\\
		\frac{\partial^2H}{\partial K\partial x}&\frac{\partial^2H}{\partial
			K\partial y}&\frac{\partial^2H}{\partial K^2}
	\end{matrix}
	\right]
	\left[
	\begin{matrix}
		\delta^Tx\\
		\delta^Ty\\
		\delta^TK
	\end{matrix}
	\right]
	dt.
\end{aligned}
\end{equation}
Note that $\delta^2 J_2 [K^*(\cdot)]\geqslant0$, then
\begin{equation}
	\frac{\partial^2
		H(t,x,y,K^*(\cdot),\lambda(t),\mu(t))}{\partial K^2}\leqslant0.
\end{equation}

That is, for all continuous times $t$ on $[0,T]$, there is
\begin{equation*}
	(d)H(t,x,y,K^*(\cdot),\lambda(t),\mu(t))=\displaystyle\max_{K(\cdot)\in \Omega}
	H(t,x,y,K(\cdot),\lambda(t),\mu(t)).
\end{equation*}
\end{proof}
\subsection{Examples}\label{egs}
\ \ \ \ 
We consider two examples, which help us to understand $K^*(\cdot)$.
\begin{example}\label{eg1}
Consider a constant linear system $x(t)$, and another system $y(t)$ whose derivative is obtained by multiplying the derivative of this system by a matrix,
\begin{equation}\label{linear-simple}
		\left\{ \begin{aligned}
			\overset{.}x(t) &=Ax+B,\\
			x(0)&=x_0, \\ \end{aligned}  \right.~
		\left\{  \begin{aligned}
			\overset{.}y(t) &=C\overset{.}x(t),\\
			y(0)&=y_0,\\  \end{aligned}  \right.
\end{equation}
where $A,~C$ are constant matrices of order $n$, and $B$ is an $n$-dimensional vector.
\end{example}

The solution of  (\ref{linear-simple}) is
\begin{equation}
\begin{aligned}
	x(t)&=e^{At} x_0+\int_{0}^{t}e^{A(t-s)}B ds,\\
	y(t)&=Cx(t)+D,\\
	D&=y_0-Cx_0.
\end{aligned}
\end{equation}
Substitute $x (t)$ to get
\begin{equation}
\begin{aligned}
y(t)&=Ce^{At} x_0+C\int_{0}^{t}e^{A(t-s)}B ds+D,\\
D&=y_0-Cx_0.
\end{aligned}
\end{equation}
Let $K(x(t,x_0 ))=y(t,K(x_0 )),$
\begin{equation*}
\begin{aligned}
&K(e^{At} x_0+\int_{0}^{t}e^{A(t-s)}B ds)=Ce^{At} x_0+C\int_{0}^{t}e^{A(t-s)} Bds+D,\\
&D=y_0-Cx_0=K(x_0 )-Cx_0.
\end{aligned}
\end{equation*}
In this instance, we can easily find $K^*(\cdot)$:
\begin{equation}
K^*(x)=Cx+y_0-Cx_0.
\end{equation}

Then $J[K^*]\equiv0$, $K^*(x(t,x_0))=y(t,K^*(x_0))$, $K^*(\cdot)$ is continuous and linear. When  $\det(C)\ne0$, $K^*(\cdot)$ is a homeomorphic map, and consequently $x(t)$ and $y(t)$ are conjugate. Thus two systems are completely similar. In this example, $K^*(\cdot)$ is a constant matrix of order $n$, and two systems are linearly similar.

Apply the maximum principle (Theorem \ref{max1}), $\lambda(t)=0~(b)$, $\mu(t)=0~(c)$, $J[K^*]\equiv0$, and the condition $(d)$ in (\ref{a1b1c1d1}) implies $$K^*=\arg \displaystyle\min_{K(\cdot)\in \Omega}J[K(\cdot)]=\arg \displaystyle\max_{K(\cdot)\in \Omega}
	H(t,x,y,K(\cdot),\lambda(t),\mu(t)).$$

\begin{example}\label{eg2}
Considere two linear systems:
\begin{equation}\label{linear-simple2}
		\left\{ \begin{aligned}
			\overset{.}x(t) &=Ax+B,\\
			x(0)&=x_0, \\ \end{aligned}  \right.~
		\left\{  \begin{aligned}
			\overset{.}y(t) &=Cy+D,\\
			y(0)&=y_0,\\  \end{aligned}  \right.
\end{equation}
where $A,~C$ are constant matrices of order $n$, and $B,~D$ are $n$-dimensional vectors.
\end{example}
Firstly, we consider the one-dimensional case, $n=1$, $x,~y\in \mathbb{R}$. The solution of
(\ref{linear-simple2}) are
\begin{equation}\label{linear solution}
\begin{aligned}
	x(t)&=e^{At}x_0+\int_{0}^{t}e^{A(t-s)}B ds\\
	&=e^{At} x_0+\frac{B}{A} (e^{At}-1),\\
	y(t)&=e^{Ct} y_0+\int_{0}^{t}e^{C(t-s)}D ds\\
	&=e^{Ct} y_0+\frac{D}{C} (e^{Ct}-1)\\
	&=\bigg(\frac{D}{C}+y_0 \bigg)
	\bigg(\frac{x(t)+\frac{B}{A}}{x_0+\frac{B}{A}}\bigg)^{\frac{C}{A}}-\frac{D}{C}  .
\end{aligned}
\end{equation}
According to
\begin{equation*}
\begin{aligned}
	\left\{ \begin{aligned}
		y_0&=K(x_0 ),\\
		y(t,K(x_0 ))&=K(x(t,x_0 )), \\ \end{aligned}  \right.
\end{aligned}
\end{equation*}
$K(x):~\mathbb{R}\rightarrow\mathbb{R}$ can be solved:
\begin{equation}
	K(x)=\bigg(\frac{D}{C}+y_0 \bigg)
	\bigg(\frac{x(t)+\frac{B}{A}}{x_0+\frac{B}{A}}\bigg)^{\frac{C}{A}}-\frac{D}{C}.
\end{equation}

For the two-dimensional case, $n=2$, $x,~y\in \mathbb{R}^2$. It should be
noted that the high-dimensional matrix cannot be divided, so the denominator needs to become the inverse of the multiplication matrix. Therefore, the condition also needs to ensure that matrices $A$ and $C$ are reversible (nonsingular matrix). In order to get the form of $K^*(\cdot)$, we put the solution $x(t)$ on the complex plane:
\begin{equation*}
x(t)+A^{-1}B=\alpha(t)e^{i\theta(t)},~
x_0+A^{-1}B=\alpha(0)e^{i\theta(0)},
\end{equation*}
where $\alpha(t)\in [0,T]\rightarrow\mathbb{R},~\theta(t) \in [0,T]\rightarrow[0,2\pi]$. Subsequently, we can obtain: 
\begin{equation}
K^*(x)=\bigg[\frac{x+A^{-1}B}{\alpha(0)}e^{-i\theta(0)}\bigg]^{A^{-1} C}(C^{-1} D+y_0 )-C^{-1}D.
\end{equation}
In fact,
\begin{equation*}
\begin{aligned}
	K(x)&=[\frac{x+A^{-1}B}{\alpha(0)}e^{-i\theta(0)}]^{A^{-1} C}(C^{-1} D+y_0 )-C^{-1}D\\
    &=[\frac{e^{At}(x_0+A^{-1}B)}{\alpha(0)}e^{-i\theta(0)}]^{A^{-1} C}(C^{-1} D+y_0 )-C^{-1}D\\
    &=[\frac{e^{At}\alpha(0)e^{i\theta(0)}}{\alpha(0)}e^{-i\theta(0)}]^{A^{-1} C}(C^{-1} D+y_0 )-C^{-1}D\\
    &=[e^{At}]^{A^{-1} C}(C^{-1} D+y_0 )-C^{-1}D\\
    &=[e^{Ct}](C^{-1} D+y_0 )-C^{-1}D\\
    &=e^{Ct}y_0+e^{Ct}C^{-1} D-C^{-1}D\\
    &=e^{Ct}y_0+(e^{Ct}-I)C^{-1}D\\
	&=y(t).
\end{aligned}
\end{equation*}

We get the form of $K^*(\cdot)$ when $n = 1, 2$. At this time, $J[K^*]\equiv 0,~K^*(x(t,x_0 ))=y(t,K^*(x_0))$. For the case of $n\geqslant3$, the form of $K^*(\cdot)$ is more complex. Anyway, logarithmic functions are not monomorphic, $K^*(\cdot)$ is not a homeomorphism, so $x(t)$ and $y(t)$ cannot conjugate in general. In this example, in order to force the conjugate condition to be satisfied, the obtained $K^*(\cdot)$ may not be homeomorphic.\\

Example \ref{eg2} shows that even the two most basic linear systems can not meet the conjugate property, which means that the conjugate property is a very strong one.

In order to obtain the conditions that homeomorphism $K^*$ satisfies, we substitute (\ref{linear-simple2}) as an example of (\ref{2nonlinear}) into the maximum principle (Theorem \ref{max1}):
\begin{equation}
\begin{aligned}
	(a')&\left\{ \begin{aligned}
		\overset{.}x(t) &=H_\lambda^T,  \\
		\overset{.}y(t) &=H_\mu^T ,\\ \end{aligned}  \right. \\
	(b')&\left\{  \begin{aligned}
		\overset{.}\lambda(t)
		&=-A^T\lambda(t)+\frac{2}{T}K^*(K^*(x(t,x_0))-y(t,K^*(x_0))),\\
		\lambda(T)&=0,\\  \end{aligned}  \right.\\
	(c')&\left\{  \begin{aligned}
		\overset{.}\mu(t) &=-C^T\mu(t)-\frac{2}{T}(K^*(x(t,x_0))-y(t,K^*(x_0))),\\
		\mu(T)&=0,\\  \end{aligned}  \right.\\
	(d')&H(t,x,y,K^*(\cdot),\lambda(t),\mu(t))=\displaystyle\max_{K(\cdot)\in\Omega}
	H(t,x,y,K(\cdot),\lambda(t),\mu(t)),
\end{aligned}
\end{equation}
where
$H(t,x(t),y(t),K(\cdot),\lambda (t),\mu (t))$
	
	$\quad\quad=-\dfrac{1}{T}||K(x(t,x_0))-y(t,K(x_0))||^2+\lambda^T
	(t)(Ax+B)+\mu^T (t)(Cy+D) . $

Starting from $(b')$ and $(c')$ , we can get that all above $\lambda (t),~\mu (t)$ have the following general solution:
\begin{equation}\begin{aligned}
	\lambda(t)&=\int_{T}^{t}\Phi(t)\Phi^{-1}(\tau)\frac{2}{T}K^*(K^*(x(\tau,x_0))-y(\tau,K^*(x_0)))d\tau,\\
	\mu(t)&=\int_{T}^{t}\Psi(t)\Psi^{-1}(\tau)(-\frac{2}{T}(K^*(x(\tau,x_0))-y(\tau,K^*(x_0))))d\tau,
\end{aligned}\end{equation}
where $\Phi(t)$ is the fundamental solution matrix of $\overset{.}\lambda(t)
		=-A^T\lambda(t)$,
$\Psi(t)$ is the fundamental solution matrix of $\overset{.}\mu(t) =-C^T\mu(t).$

From Theorem \ref{max1} $(d)$, we know that the minimizer $K^*$ such that
$$
	\frac{\partial
		H(t,x,y,K^*(\cdot),\lambda(t),\mu(t))}{\partial K}=0,
$$
which is equivalent to
\begin{equation}
	-\frac{2}{T}(K^*(x(t,x_0))-y(t,K^*(x_0)))^T(x(t,x_0)-\frac{\partial y(t,K^*(x_0))}{\partial y_0}x_0)=0.
\end{equation}
Substituting the solution (\ref{linear solution}) of (\ref{linear-simple2}) yields
\begin{equation}
	((e^{At}-e^{Ct})K^*(x_0)-\int_{0}^{t}K^*(e^{A(t-s)}B)-e^{C(t-s)}Dds)^T((e^{At}-e^{Ct})x_0+
\int_{0}^{t}e^{A(t-s)}Bds)=0.
\end{equation}

\section{Uniqueness and Similarity-based on Embedology}\label{Uniqueness Similarity}
\subsection{Uniqueness}\label{Uniqueness}
\ \ \ \ In Section \ref{Sufficient} and Section \ref{MAX}, we have discussed the
existence of $K$. Now we will observe if it is unique. The existence of $K$ in Proposition \ref{Hartman} is derived from the Banach contraction mapping principle and $K$ is unique.
In this section, we consider the uniqueness of $K(t)$ in Theorem \ref{exists in GL(n)}.\\

As we all know, the orbits of autonomous systems do not intersect. Since $K(t)$ has $n^2$ bases, $n^2$ constraints are needed to ensure uniqueness. For non-autonomous systems, we usually consider $f(t,x),~g(t,y)$ that are piece-wise constant, i.e.
\begin{equation}	
	\left\{ \begin{aligned}
		&\overset{.}{\hat{x}}(t)=\hat{f}(\hat{x}(t)),\\
		          &\hat{x}=(x,t),\hat{f}=(f,1) ,\\ \end{aligned}  \right.~
	\left\{  \begin{aligned}
		&\overset{.}{\hat{y}}(t)=\hat{g}(\hat{y}(t)),\\
	             &\hat{y}=(y,t),\hat{g}=(g,1) .\\  \end{aligned}  \right.
\end{equation}
In this case, $\hat{f}(\hat{x}),~\hat{g}(\hat{y})$ do not change explicitly with $t$, it is enough to consider the time-homogeneous equation, i.e. autonomous systems.\\

From Section \ref{MAX} Theorem \ref{max1}, we know that the minimizer $K^*(t)$ such that
$$
	\frac{\partial
		H(t,x,y,K^*(t),\lambda(t),\mu(t))}{\partial K}=0,
$$
which is equivalent to
\begin{equation}\label{KKT}
	\frac{\partial}{\partial k_{ij}(t)}(x^T(t)K^{*T}(t)K^*(t)x(t))
-2\frac{\partial}{\partial k_{ij}(t)}(x^T(t)K^{*T}(t)y(t))
+\frac{\partial}{\partial k_{ij}(t)}(y^T(t)y(t))=0,
\end{equation}
where $k_{ij}(t)$ denotes the element of the $i$-th row and $j$-th
column of the matrix $K^*(t)$, and $i,j=1,2,\cdots,n$.
Since the $n^2$ constraints in (\ref{KKT}) are irrelevant, $K(t)$ is unique and continuous with respect to $t$. (\ref{KKT}) is the Karush-Kuhn-Tucker (KKT) optimality condition.

Let $\bigg(M_{n\times n}\bigg)_{m,i}$ denote the element of the $m$-th row and $i$-th column of the matrix $M_{n\times n}$ and $\mathbf{e}_j(t)\triangleq (0,\cdots,0,x_j(t),0,\cdots,0)^T$.
For the first term in (\ref{KKT}), we obtain the following equation
by means of the chain rule of derivatives:
$$\begin{aligned}
\frac{\partial}{\partial k_{ij}(t)}(x^T(t)K^{*T}(t)K^*(t)x(t))&=
x^T(t)\frac{\partial K^{*T}(t)}{\partial k_{ij}(t)}K^*(t)x(t)+
x^T(t)K^{*T}(t)\frac{\partial K^*(t)}{\partial k_{ij}(t)}x(t)\\
&=\mathbf{e}_j^T(t)K^*(t)x(t)+x^T(t)K^{*T}(t)\mathbf{e}_j(t)\\
&=2\underset{l=1}{\overset{n}{\sum}}x_j(t)k_{jl}(t)x_l(t).
\end{aligned}
$$

For the second term in (\ref{KKT}), we have
$$\begin{aligned}
\frac{\partial}{\partial k_{ij}(t)}(x^T(t)K^{*T}(t)y(t))&=
x^T(t)\frac{\partial K^{*T}(t)}{\partial k_{ij}(t)}y(t)+
x^T(t)K^{*T}(t)\frac{\partial y(t)}{\partial k_{ij}(t)}\\
&=\mathbf{e}_j^T(t)y(t)+
x^T(t)K^{*T}(t)\frac{\partial y(t)}{\partial y_0(t)}\frac{\partial y_0(t)}{\partial k_{ij}(t)}\\
&=x_j(t)y_j(t)+x^T(t)K^{*T}(t)\frac{\partial y(t)}{\partial y_0(t)}\mathbf{e}_j(0)\\
&=x_j(t)y_j(t)+\underset{l=1}{\overset{n}{\sum}}\underset{m=1}{\overset{n}{\sum}}
x_j(0)k_{lm}(t)x_l(t)\bigg(\frac{\partial y(t)}{\partial y_0(t)}\bigg)_{m,i}.
\end{aligned}
$$

For the last term in (\ref{KKT}), it is enough to compute that
$$\begin{aligned}
\frac{\partial}{\partial k_{ij}(t)}(y^T(t)y(t))&=
\frac{\partial y^T(t)}{\partial k_{ij}(t)}y(t)+y^T(t)\frac{\partial y(t)}{\partial k_{ij}(t)}\\
&=\mathbf{e}_j^T(0)\bigg(\frac{\partial y(t)}{\partial y_0(t)}\bigg)^Ty(t)
+y^T(t)\frac{\partial y(t)}{\partial y_0(t)}\mathbf{e}_j(0)\\
&=2\underset{l=1}{\overset{n}{\sum}}x_j(0)y_l(t)\bigg(\frac{\partial y(t)}{\partial y_0(t)}\bigg)_{l,i}.
\end{aligned}
$$
Substitute the three terms into formula (\ref{KKT}), and we get
\begin{equation}\label{KKT1}\begin{aligned}
2\underset{l=1}{\overset{n}{\sum}}x_j(t)k_{jl}(t)x_l(t)
-2\bigg[x_j(t)y_j(t)&+\underset{l=1}{\overset{n}{\sum}}\underset{m=1}{\overset{n}{\sum}}
x_j(0)k_{lm}(t)x_l(t)\bigg(\frac{\partial y(t)}{\partial y_0(t)}\bigg)_{m,i}\bigg]\\
&+2\underset{l=1}{\overset{n}{\sum}}x_j(0)y_l(t)\bigg(\frac{\partial y(t)}{\partial y_0(t)}\bigg)_{l,i}=0.
\end{aligned}
\end{equation}

Consider $x(t)$, $y(t)$ satisfy (\ref{Hartman y}) and (\ref{Hartman x}) in the Hartman-Grobman theorem,
substitute $\frac{\partial y(t)}{\partial y_0(t)}=e^{At}$ into formula (\ref{KKT1}), and we get
\begin{equation}\label{KKT2}\begin{aligned}
2\underset{l=1}{\overset{n}{\sum}}x_j(t)k_{jl}(t)x_l(t)
-2\bigg[x_j(t)y_j(t)&+\underset{l=1}{\overset{n}{\sum}}\underset{m=1}{\overset{n}{\sum}}
x_j(0)k_{lm}(t)x_l(t)\bigg(e^{At}\bigg)_{m,i}\bigg]\\
&+2\underset{l=1}{\overset{n}{\sum}}x_j(0)y_l(t)\bigg(e^{At}\bigg)_{l,i}=0.
\end{aligned}
\end{equation}

Note that $i,j=1,\cdots,n$ in (\ref{KKT}), (\ref{KKT1}) and (\ref{KKT2}), so they contain $n^2$ constraints, and the solved $K^*(t)$ is unique.
\subsection{Similarity degree}
\ \ \ \ After solving \textbf{the minimizer $K(t)$}, we can define the
similarity between two systems. Obviously, if two systems are conjugate, the denominator in the above definition is 0. According to Definition \ref{similarity0}, the similarity degree between two systems is 1 ($100\%$).

For further calculation (Section \ref{test}), we discretize the trajectory and give the discrete
similarity definition:
\begin{definition}\textbf{(Discrete similarity degree)}\label{similarity2}
\begin{equation*}
	\rho(K(\cdot))=\left\{ \begin{aligned}
		\dfrac{log\Big(1+\dfrac{1}{N}
	 \underset{i=1}{\overset{N}{\sum}}||K(i)x(i,x_0)-y(i,y_0)||^2\Big)}{\dfrac{1}{N} \underset{i=1}{\overset{N}{\sum}}||K(i)x(i,x_0)-y(i,y_0)||^2 }, &\qquad \underset{i=1}{\overset{N}{\sum}}||K(i)x(i,x_0)-y(i,y_0)|| \ne 0,\\
		          1,\qquad\qquad &\qquad \underset{i=1}{\overset{N}{\sum}}||K(i)x(i,x_0)-y(i,y_0)|| = 0, \end{aligned}  \right.
\end{equation*}
where $K(i)=\{k_{jk}(i)\}_{j,k=1}^n$.
\end{definition}

Analogous to Section \ref{The main idea} \textbf{(v)}, if the minimizer $K(i)$ is a constant matrix, then the similarity is the linear similarity. If $K(i)$ is an orthogonal matrix (metric preserving) or symplectic matrix (differential structure preserving), the similarity is the rigid similarity.

\subsection{Similarity theorem based on Embedology}\label{embed}
\ \ \ \ Takens embedding theorem is a fundamental principle to calculate the dimension of embedded phase space \cite{[Saue+91]}. An important problem in applications of chaos is to reconstruct an $n$-dimensional phase space that can contain the chaotic motion from the time series of a single variable.

For the question of how big the dimension $n$ should be, in 1934, American mathematician Whitney and in 1980, Takens \cite{[Takens+81]} successively proved the embedding theorem of the required dimension $n$: in order to ensure that the phase space can accommodate the topological characteristics of the original attractor of the state space, if the original attractor is in a $ d$-dimensional space, the dimension of the phase space in which the attractor is embedded must reach $n=2d+1$.
For example, the dimension of the attractor obtained by reconstructing the phase space from a time series is $d=2. 51$, that is, the attractor is in a space with $d=3$ dimension, so the dimension of the embedded phase space is at least $n= 2\times3+1=7$.
\begin{remark}
It is noteworthy that $n^2>2n+1$, when $n\geq3,n\in\N^*$. This means that the reconstructed $n$-dimensional phase space is not unique by Takens embedding theorem.
\end{remark}

Back to the original systems (\ref{2nonlinear}), we only need to consider autonomous systems by
using the treatment of non-autonomous systems in Section \ref{Uniqueness},
\begin{equation}
	\left\{ \begin{aligned}
		\overset{.}x(t)&=f(x),\\
		          x(0)&=x_0 ,\\ \end{aligned}  \right.~
	\left\{  \begin{aligned}
		\overset{.}y(t)&=g(y),\\
	             y(0)&=y_0=K(0)x_0 .\\  \end{aligned}  \right.
\end{equation}

If we know the behavior of the system in $[0, T]$, combined with the Takens embedding theorem \cite{[Takens+81]} and the similarity theory, we can realize the reconstruction and prediction of the $n$-dimensional system. This section describes how $K(t)$ predicts the system completely and accurately, and how $n$-order constant matrix $K$ predicts the past and future of the system, corresponding to Algorithm \ref{algorithm1} and Algorithm \ref{algorithm2} of Section \ref{3d chaotic systems} respectively. \\

\textbf{Completely accurate prediction-$K(t)$.}

We use $y(t)$ to predict $x(t)$, in other words, solving $K(t)$ makes the similarity of the two systems reach 100$\%$ (completely accurate prediction). $K(t)$ needs to meet $n^2$ constraints as $K(t)\in GL(n)$. In fact, each point in $\R^n$ contains $n$ one-dimensional constraints, and $K(t)$ has $n^2$ one-dimensional constraints. Let
\begin{equation}
K(t)(x(t),c_1(t),\cdots ,c_{n-1}(t))=(y(t,K(0)x_0),y_1(t),\cdots ,y_{n-1}(t)),
\end{equation}
where $y_i(t)$ is the trajectories of the known $y$ system starting from different initial values; $\dot{c_i}=f_i(t,c_i)=f(c_i)+\epsilon_i\backsimeq f(c_i)$, $\backsimeq$ indicates approximate equality, $\epsilon_i$ is the random small disturbance; and $i=1,\cdots ,n-1$. Since the orbits of autonomous systems are disjoint, $c_i$ can be understood as $n-1$ approximate solution trajectories of the system $x$, or input data.

On the basis of the above analysis, $K(t)$ exists and is unique. At this time, the similarity between the two systems reaches $100\%$. We call it completely accurate prediction.\\

\begin{figure}[htbp]
\centering
\includegraphics[width=0.9\linewidth]{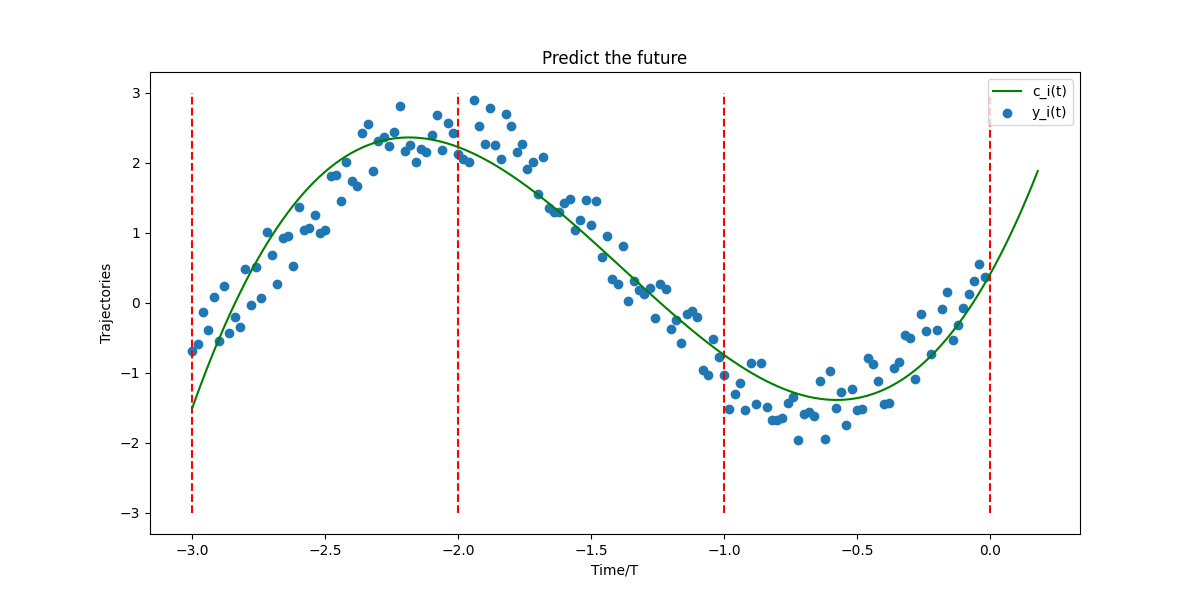}
\caption{\small{Forecast future diagram.}}
\label{ci}
{\small{The blue points represents $y_i(t)~(i=1,2,3)$, the trajectory of the known $x$ system on $[-3T,-2T],[-2T,-T],[-T,0]$. The green line indicates $c_i(t)~(i=1,2,3)$, which is a physical law learned from the past $n$-segment trajectory and used to predict the future.}}
\end{figure}

\textbf{Predict the future and infer the past-$K$.}

To predict the future from the past $n$ time behaviors of the system, or to infer the past from the future $n$ time behaviors, we can also adopt a similar method to solve $K\in \M^n$ to maximize the similarity between the two systems. Let
\begin{equation}
K(c_0(t),\cdots ,c_{n-1}(t))=(y_1(t),\cdots ,y_n(t)),
\end{equation}
where $y_i(t)$ is the trajectory of the known $x$ system on $[-(n-i+1)T,-(n-i)T],~i=1,\cdots ,n$; $y_{i}(-(n-i)T)=y_{i+1}(-(n-i)T)$, $i=1,\cdots ,n-1$; and
\begin{equation}
\dot{c_i}=\left\{
\begin{aligned}f_i(t,c_i)=f(x)+\epsilon_i\backsimeq f(x),& \quad [-(n-i)T,-(n-i-1)T],\\
 0, &\quad  \text{others else}. \\
 \end{aligned}\right.
 \end{equation}

The above analysis shows that $K$ is unique and reversible. Then our predicted $x_n$ is $x_n(0)=K^{-1}y_n(0)$. Taking $n = 3$ as an example, Figure \ref{ci} vividly demonstrates this process.

Analogically, let $y_i(t)$ is the trajectory of the known $x$ system on $[(n-i)T,(n-i+1)T],~i=1,\cdots ,n$; $y_{i}((n-i)T)=y_{i+1}((n-i)T)$, $i=1,\cdots ,n-1$; and
\begin{equation}
\dot{c_i}=\left\{
\begin{aligned}f_i(t,c_i)=f(x)+\epsilon_i\backsimeq f(x),&\quad  [(n-i-1)T,(n-i)T],\\
 0, &\quad  \text{others else}. \\
 \end{aligned}\right.
 \end{equation}
Then our inferred $x_{-n}$ is $x_{-n}(0)=K^{-1}y_n(0)$.
\section{Chaotic systems and Numerical simulation}\label{test}

\subsection{3d chaotic systems}\label{3d chaotic systems}
\ \ \ \ Three three-dimensional systems, Lorenz system, Chua's circuit system and
Chen's system, are considered in this section. The parameters of these systems
are derived from the literature (see \cite{[AFM+13],[CHEN+14]}). The reason why such
systems are considered is that they have attracted extensive attention.

Consider Lorenz system:
\begin{equation}
	\dot{X}=F(t,X)=\left( \begin{aligned}
		\overset{.}x_1(t)\\
		\overset{.}x_2(t)\\
		\overset{.}x_3(t) \end{aligned}  \right)=
\left(
	\begin{matrix}
	    \sigma(x_2-x_1)\\ \rho x_1-x_2-x_1x_3\\x_1x_2-\beta x_3
	\end{matrix}
\right)	,
\end{equation}
where $\sigma,\rho,\beta$ are positive real parameters. In the following numerical calculation, we adopt $X_0=(0.0,1.0,0.0),~\sigma_1=10,~\rho_1=28,~\beta_1=8/3$, and $\sigma_2=10,~\rho_2=28,~\beta_2=3$, respectively.

Consider Chua's circuit system:
\begin{equation}
	\dot{Y}=G(t,Y)=\left( \begin{aligned}
		\overset{.}y_1(t)\\
		\overset{.}y_2(t)\\
		\overset{.}y_3(t) \end{aligned}  \right)=
	\left(
	\begin{matrix}
		\alpha(y_2-y_1)-f(y_1)\\x_1-x_2+x_3\\x_1x_2-\beta y_2
	\end{matrix}
	\right)	,
\end{equation}
where $f(y_1)=m_1y_1+\frac{1}{2}(m_0-m_1)(|y_1+1|-|y_1-1|)$, and
$\alpha,\beta,m_0,m_1$ are real parameters. In the following numerical calculation, we adopt $Y_0=(0.1,0.3,-0.6),~\alpha=10,~\beta=15,~m_0=-1.2,~m_1=-0.6.$

Consider Chen's system:
\begin{equation}
	\dot{Z}=E(t,Z)=\left( \begin{aligned}
		\overset{.}z_1(t)\\
		\overset{.}z_2(t)\\
		\overset{.}z_3(t) \end{aligned}  \right)=
	\left(
	\begin{matrix}
		a(z_2-z_1)\\(c-a)z_1+cz_2-z_1z_3\\z_1z_2-bz_3
	\end{matrix}
	\right)	,
\end{equation}
where $a,b,c$ are positive real parameters. In the following numerical calculation, we adopt $Z_0=(0.0,1.0,0.0),~a=35,~b=3,~c=28.$\\

Next, we give the algorithm to solve $K (t)$, and give the numerical solution and the comparison of similarity.

Considerations about the similarity give rise to an algorithm for solving $K(t)$ (Algorithm \ref{algorithm1}) according to the maximum principle in Section \ref{MAX}. Subsequently, the phase diagrams (Figure \ref{Figure1}) of the four systems, the linear transformation $K$ of two different systems, and the $K(t)$ (Algorithm \ref{algorithm2}) with $100\%$ similarity of two different systems are obtained.\\

Note that we adopt the concept of time average (Definitions \ref{Cost functional}, \ref{similarity}, \ref{similarity2}), which means that the similarity will be higher and higher with the evolution of time, although there may be a period of similarity and a period of dissimilarity at the beginning.

For example, if we use $T = 300$, the final similarity is $94.46 \%$ , $98.99 \%$  and $90.55 \%$. This is illustrated in Figure \ref{Figure5}.  It can be seen that with the passage of time, the similarity also
increases, and finally tends to a stable state.

\begin{figure}[htbp]
\centering
\subfigure{
\begin{minipage}[b]{0.5\linewidth}
		\includegraphics[width=1\linewidth]{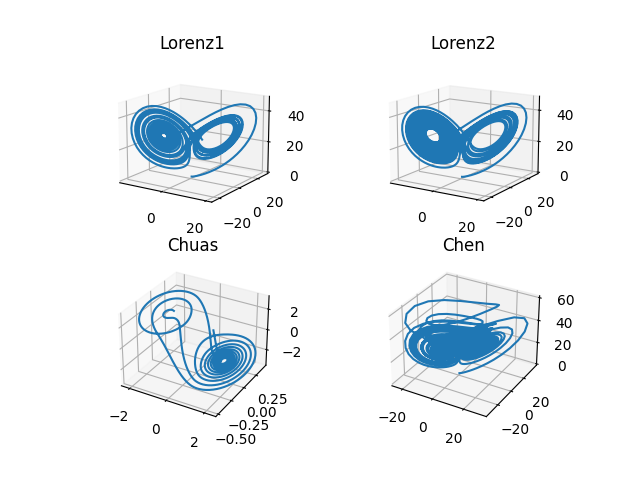}
		\caption*{\small{\textbf{a.} Trajectories of four nonlinear systems.}}
		\label{Figure1}
	\end{minipage}}
\subfigure{
\begin{minipage}[b]{0.45\linewidth}
			\includegraphics[width=1\linewidth]{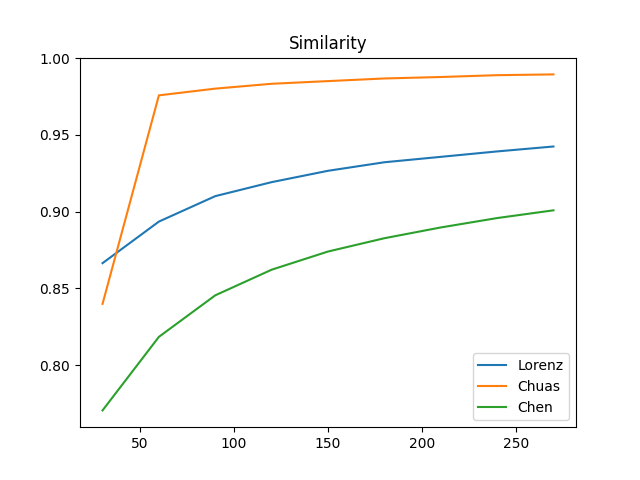}
			\caption*{\small{\textbf{b.} Similarity evolution graph.}}
			\label{Figure5}
	\end{minipage}}
\caption{\small{Comparison between Lorenz system and other three nonlinear systems.}}
\end{figure}

\begin{algorithm}
\caption{ Solving $K (t)$ with 100$\%$ similarity}
\label{algorithm1}

Step 1. Give $X_0,~Y_0$, solves two ODEs to obtain $X(t),~Y(t)$.

Step 2. Enter the cycle from step $i = 0$, and $N = T$ terminates the cycle.

Step 3. Takes columns $i$ to $i + n$ of $X(t),~Y(t)$.

Step 4. Calculate $K(i)$ by $K(i)X(i)=Y(i)$.

Step 5. $K(i)$ corresponds to the $i$-th matrix of $K(t)$.

Step 6. Let $i=i+1$, repeat Step 3 to Step 5 until the end.
\end{algorithm}
\begin{algorithm}
\caption{ Constant matrix $K$ for maximum similarity}
\label{algorithm2}

Step 1. Give $X_0,~Y_0$, solves two ODEs to obtain $X(t),~Y(t)$.

Step 2. The initial similarity is set to 0.

Step 3. Enter the cycle from step $i = 0$, and $N = T$ terminates the cycle.

Step 4. Takes columns $i$ to $i + n$ of $X(t),~Y(t)$.

Step 5. Calculate $K(i)$ by $K(i)X(i)=Y(i)$.

Step 6. Calculate the similarity (Definition \ref{similarity2}) of this step.

step 7. Compare the similarity with the previous step, the greater similarity and the corresponding matrix $K(i)$ are stored in this step.

Step 8. Let $i=i+1$, repeat step 4 to step 7 until the end.
\end{algorithm}

\subsection{Numerical solution}
\ \ \ \ Here, we study the similarity between the two systems through specific numerical simulation, it is divided into two steps:\\

\textbf{(i) Find the solution trajectory.}\\

Using differential equation solvers (there are already many differential
equation solvers to make this process easy to implement, such as Python $scipy.integrate. odeint$) to solve he trajectories of $X_1 (t),~X_2
(t),~Y(t),~Z(t)$, which are recorded as $track(i),~i=1,2,3,4$.\\

\textbf{(ii) Algorithms.}\\

Use Algorithms \ref{algorithm1} and \ref{algorithm2} to obtain the corresponding numerical solutions and similarity. Here, we adopt $T= 30, dt = 0.01$. For specific procedure details, see Python file. Figure \ref{Figure1} shows the trajectories of four systems. We can
clearly see that four nonlinear systems are the structure of double scroll chaotic
attractor, which has a certain similarity. The numerical solution
of Algorithm \ref{algorithm2} is shown in Table \ref{Table1}. The pairwise comparison of
four systems can  be seen intuitively in Figure [\ref{Figure 2},\ref{Figure 3},\ref{Figure 4}].\\

The following numerical results prove once again that two systems can reach the maximum value of linear similarity (under the minimizer constant matrix $K$), and the similarity increases with time; Under the time-varying $K(t)$, the two systems can be completely similar (the similarity reaches 100$\%$), i.e. the functional attains the extreme value of 0.
\begin{center}
	\begin{threeparttable}
   \captionof{table}{Similarity and constant matrix $K$ of different systems.} 
   \label{Table1}
		\begin{tabular}{l c c c }	
			\hline
			   Systems&Initial&Constant matrix K&Final\\
			   &similarity&   &similarity\\
			\hline				
			Lorenz1$\&$Lorenz2&18.35$\%$&$\(
			\begin{matrix}
				0.99669589&0.00708679&-0.00368821\\
				-0.01092915&1.02344135&-0.0121997\\
				-0.02563355&0.0549802&0.97138635
			\end{matrix}\)
		$&86.64$\%$ \\
			Lorenz$\&$Chua's&11.93$\%$&$\(
			\begin{matrix}
				2.06541069&-1.78187423&-0.16405216\\
				2.61340569&-2.59013073&0.11312293\\
				3.16451651&-3.40502316&0.39380414
			\end{matrix}\)
			$&83.99$\%$ \\
		    Lorenz$\&$Chen's&16.06$\%$&$\(
		    \begin{matrix}
		    	2.98413841&-2.96556925&0.57976059\\
		    	2.7629626&-2.65449576&0.49652093\\
		    	2.7629626&-2.65449576&0.49652093
		    \end{matrix}\)
		    $&77.05$\%$ \\
		    \hline
		\end{tabular}
				\begin{tablenotes}
					\item[] \small{Note: see Python file for detailed
					procedures.	This is the result at T = 30.}	
				\end{tablenotes}
	\end{threeparttable}
\end{center}

Compare the data in Table \ref{Table1} horizontally. The calculated minimizer $K$
can
significantly improve the similarity between two systems, reflecting the
similar geometric structure to a certain extent.

Compare the data in Table \ref{Table1} vertically. From the initial similarity,
two
Lorenz systems with different parameters are the most similar, followed by
Lorenz and Chen's, and Lorenz and Chua's are the
most unlike. From the similarity after multiplying $K$, two Lorenz systems are still the most similar, but the similarity
between Lorenz and Chua's increases the most, indicating that
Chua's can be very similar to Lorenz after linear
transformation.

\begin{figure}[htbp]
\centering
\subfigure{
\begin{minipage}[b]{0.33\linewidth}
			\includegraphics[width=1\linewidth]{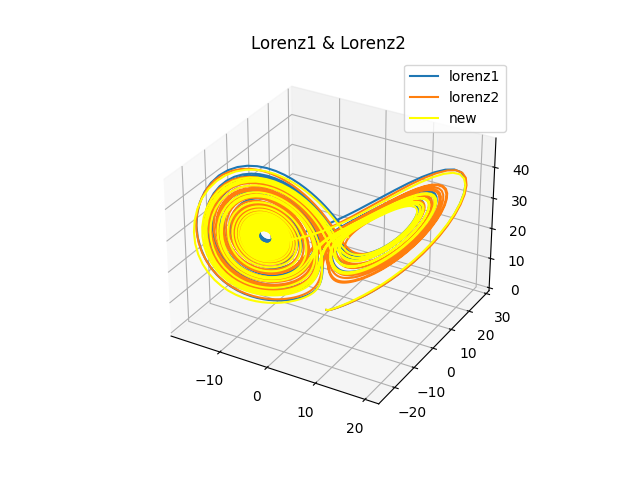}
			\caption*{\small{\textbf{a.} Two Lorenz systems.}}
			\label{Figure 2}
	\end{minipage}}
\subfigure{
\begin{minipage}[b]{0.3\linewidth}
		\includegraphics[width=1\linewidth]{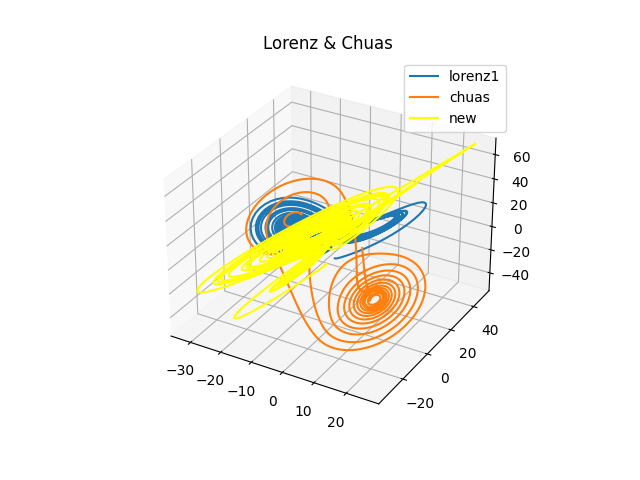}
		\caption*{\small{\textbf{b.} Lorenz system and Chua's circuit system.}}
		\label{Figure 3}
	\end{minipage}}
\subfigure{
\begin{minipage}[b]{0.33\linewidth}
		\includegraphics[width=1\linewidth]{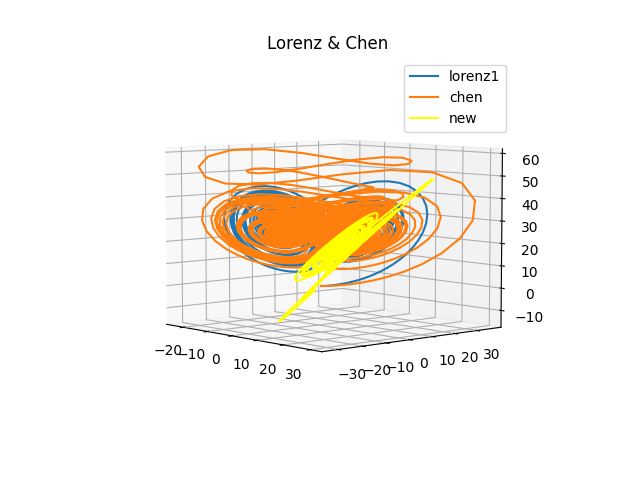}
		\caption*{\small{\textbf{c.} Lorenz system and Chen's system.}}
		\label{Figure 4}
	\end{minipage}}
\caption{\small{Pairwise comparison of four systems.}}
{\small{The blue line represents the trajectory of Lorenz system, orange represents the other system (Lorenz2, Chua's circuit, Chen's), and yellow represents the trajectory after $K$ transformation.}}
\end{figure}

This paper discusses the similarity between two deterministic ordinary differential systems. We will naturally ask, if the two dynamic systems are not ordinary differential equations (ODEs), but partial differential equations (PDEs) or stochastic differential equations (SDEs) with random terms, what is the similarity between them? After Pontryagin maximum principle, Peng \cite{[P+90]} proposed the stochastic maximum principle, and there have been some work, such as \cite{[HPW+10]} and \cite{[CPT+16]}. Can the maximum principle be considered as a necessary condition for the existence of similarity? What is the sufficient condition? Can we predict PDEs or SDEs based on the Takens embedding theorem? There are still many open problems. In the forthcoming paper, we will consider the similarity between two infinite dimensional dynamic systems or stochastic dynamic systems in detail.


\begin{thebibliography}{ying}
\bibitem{[AFM+13]} A. Algaba, F. Fern\'{a}ndez-S\'{a}nchez, M. Merino, A. Rodr\'{i}guez-Luis. Chen's attractor exists if Lorenz repulsor exists: the Chen system is a special case of the Lorenz system. Chaos 23 (2013), 6 pp. 
\bibitem{[BCP+07]} L. Baratchart, M. Chyba, J. Pomet. A Grobman-Hartman theorem for control systems. J. Dynam. Differential Equations 19 (2007), pp. 75-107.
\bibitem{[BL+94]} P.W. Bates, K. Lu. A Hartman-Grobman theorem for the Cahn-Hilliard and phase-field equations. J. Dynam. Differential Equations 6 (1) (1994), pp. 101-145.
\bibitem{[BV+06]} L. Barreira, C. Valls. A Grobman-Hartman theorem for nonuniformly hyperbolic dynamics. J. Differential Equations 228 (2006), pp. 285-310.
\bibitem{[BV+09]} L. Barreira, C. Valls. A Grobman-Hartman theorem for general nonuniform exponential dichotomies. J. Funct. Anal. 257 (2009), pp. 1976-1993.
\bibitem{[CPT+16]} M. Chekroun, E. Park, R. Temam. Roger The Stampacchia maximum principle for stochastic partial differential equations and applications. J. Differential Equations 260 (2016), pp. 2926-2972. 
\bibitem{[CQ+88]} N. Carmichael, M. Quinn. Fixed-point methods in nonlinear control. IMA J. Math. Control Inform., 5 (1988), pp. 41-67.
\bibitem{[DK+70]} J. Davison, C. Kunze. Some sufficient conditions for the global and
local controllability of nonlinear time varying systems. J. SIAM Control 8 (1970), pp. 489-497.
\bibitem{[G+65]} D. Grobman. Homeomorphisms of systems of differential equations. Dokl. Akad. Nauk SSSR 128 (1965), pp. 880-881.
\bibitem{[WL+12]} J.K. Hale. Ordinary Differential Equations. Pure Appl. Math., vol. 21, Wiley-Interscience, 1969.
\bibitem{[H+60]} P. Hartman. A lemma in the theory of structural stability of differential equations. Proc. Amer. Math. Soc. 11 (4) (1960), pp. 610-620.
\bibitem{[HP+16]} M. Hein, J. Pr\"{u}ss. The Hartman-Grobman theorem for semilinear hyperbolic evolution equations. J. Differential Equations 261 (2016), pp. 4709-4727. 
\bibitem{[HPW+10]} Y. Han, S. Peng, Z. Wu. Maximum principle for backward doubly stochastic control systems with applications. SIAM J. Control Optim. 48 (2010), pp. 4224-4241.
\bibitem{[L+91]} K. Lu. A Hartman-Grobman theorem for scalar reaction-diffusion equations. J. Differential Equations 93 (2) (1991), pp. 364-394.
\bibitem{[Poincare]} H. Poincar\'{e}. Cinqui\`{e}me compl\'{e}ment \`{a}\'{l}analysis situs. Rend. Circ. Mat. Palermo 18 (1904), 45-110.
\bibitem{[P+90]} S. Peng. A general stochastic maximum principle for optimal control problems. SIAM J. Control Optim. 28 (1990), pp. 966-979.
\bibitem{[Saue+91]} T. Sauer, J. Yorke, M. Casdagli. Embedology. J. Statist. Phys. 65 (1991), pp. 579-616.
\bibitem{[Takens+81]} F. Takens. Detecting strange attractors in turbulence. Dynamical systems and turbulence, Warwick 1980 (Coventry, 1979/1980), pp. 366-381, Lecture Notes in Math., 898, Springer, Berlin-New York, 1981.
\bibitem{[CHEN+14]} X. Wang, G. Chen. Generating Lorenz-like and Chen-like attractors from a simple algebraic structure. Sci. China Inf. Sci. 57 (2014), 7 pp. 

\bibitem{[WU+17]} Z. Wu, Y. Liu, H. Wang. Optimal control theory: A concise introduction. Higher Education Press, Beijing, 2017.
\bibitem{[ZLL+22]} W. Zhang, P. Liu, X. Lei. On H\"{o}lder dependence of the parameterized Hartman-Grobman theorem. Acta Math. Sin. (Engl. Ser.) 38 (2022), pp. 137-147.
\bibitem{[ZLZ+17]} W. Zhang, K. Lu, W. Zhang. Differentiability of the conjugacy in the Hartman-Grobman theorem. Trans. Amer. Math. Soc. 369 (2017), pp. 4995-5030. 





\end{thebibliography}
 \end{document}